\documentclass{article}
\usepackage{graphicx,amsfonts,amsmath,mathrsfs,amssymb,amsthm,url,color}
 \usepackage{fancyhdr,indentfirst,bm,enumerate,natbib, float,subfigure}
  \usepackage[colorlinks=true,citecolor=blue]{hyperref}

    \def\lm{\lambda}                  
                      \def\ep{{\epsilon}}

    \newcommand{\E}{\mathbb{E}}
     \newcommand{\R}{\mathbb{R}}
      \newcommand{\N}{\mathbb{N}}
       \renewcommand{\P}{\mathbb{P}}
\renewcommand{\(}{\left (}
 \renewcommand{\)}{\right )}
  \renewcommand{\[}{\left [}
   \renewcommand{\]}{\right ]}
\newtheorem{theorem}{Theorem}[section]
 
  \newtheorem{lemma}[theorem]{Lemma}
   
    \newtheorem{definition}{Definition}[section]
     \newtheorem{example}[theorem]{Example}

       \newtheorem{conjecture}[theorem]{Conjecture}
        \newtheorem{remark}[theorem]{Remark}

\numberwithin{equation}{section}
 \numberwithin{theorem}{section}

\renewcommand{\cite}{\citet}
 \allowdisplaybreaks
  
   \addtolength{\textheight}{.5\baselineskip}
\textheight 23cm
 \textwidth 16.5cm
  \topmargin -1.2cm
   \oddsidemargin 0cm
    \evensidemargin 0cm
\def\red{\color{red}}
\def\blue{\color{blue}}

\begin{document}

\baselineskip=19pt

\title{Negative Dependence in Knockout Tournaments}

 \author{ Yuting Su\qquad Zhenfeng Zou\qquad Taizhong Hu\footnote{\ E-mail addresses: {\blue zfzou@ustc.edu.cn} (Z. Zou),  {\blue thu@ustc.edu.cn} (T. Hu) }   \\[10pt]
   Department of Statistics and Finance, School of Management,\\
        University of Science and Technology of China,\\
           Hefei, Anhui 230026, China
   }

\date{December, 2024\\ Revised July, 2025}
\maketitle

\begin{abstract}

Negative dependence in tournaments has received attention in the literature. The property of \emph{negative orthant dependence} (NOD) was proved for different tournament models with a special proof for each model. For general round-robin tournaments and knockout tournaments with random draws, \cite{MR23} unified and simplified many existing results in the literature by proving a stronger property, \emph{negative association} (NA). For a knockout tournament with a non-random draw, \cite{MR23} presented an example to illustrate that $\bm S$ is NOD but not NA. However, their proof is not correct. In this paper, we establish the properties of \emph{negative regression dependence} (NRD), \emph{negative left-tail dependence} (NLTD) and \emph{negative right-tail dependence} (NRTD) for a knockout tournament with a random draw and with players being of equal strength. For a knockout tournament with a non-random draw and with equal strength, we prove that $\bm S$ is NA and NRTD, while $\bm S$ is, in general, not NRD or NLTD.	
\medskip
		
\noindent \textbf{MSC2000 subject classification}: Primary 62H05;  Secondary 60E15

\noindent \textbf{Keywords}: Negative regression dependence; Negative left-tail dependence; Negative right-tail dependence; Negative association; Negative supermodular dependence

\noindent \textbf{Declarations of interest}: none
\end{abstract}

\section{Introduction}

\subsection{Negative dependence}

There is a long history of dependence modeling among multiple sources of randomness in probability, statistics, economics, finance and operations research. Various notions of positive and negative dependence were introduced in the literature. The notions of negative dependence (except in the bivariate case) are not the mirror image of those of positive dependence. The structures of negative dependence can be more complicated. Popular notions of negative dependence include \emph{negative orthant dependence} (NOD), \emph{negative association} (NA, \cite{AS81}), \emph{weak negative association} (WNA, \cite{CEW25}), \emph{negatively supermodular dependence} (NSMD, \cite{Hu00}), \emph{negative regression dependence} \citep{DR98,HX06}, \emph{strongly multivariate reverse regular of order $2$} \citep{KR80}, \emph{pairwise counter-monotonicity} \citep{CL14,LLW23}, \emph{joint mixability} \citep{PW15,WW16}, and others.

Recall that a random vector $\bm X=(X_1, \ldots, X_n)$ is said to be smaller than another random vector $\bm Y=(Y_1, \ldots, Y_n)$ in the {\it usual stochastic order}, denoted by $\bm X\le_{\rm st} \bm Y$, if $\E [\varphi (\bm X)]\le \E [\varphi(\bm Y)]$ holds for all increasing functions $\varphi$ for which the expectations exist \citep[][Section 4B]{SS07}. Also, we denote by $[\bm X|A]$ any random vector/variable whose distribution is the conditional distribution of $\bm X$ given event $A$. For any $\bm x\in \R^n$ and $J\subset [n]:=\{1, 2, \ldots, n\}$, let $\{X_j, j\in J\}$, $\{X_j\le x_j, j\in J\}$ and $\{X_j>x_j, j\in J\}$ be abbreviated by $\bm X_J$, $\bm X_J\le \bm x_J$ and $\bm X_J> \bm x_J$, respectively.  Throughout, `increasing' and `decreasing' are used in the weak sense, and $\stackrel {d}{=}$ means equality in distribution.

\begin{definition}
 {\rm \citep{AS81, JP83}}\ \ A random vector $\bm X$ is said to be {\rm NA} if for every pair of disjoint subsets $A_1, A_2\subset [n]$,
  $$
     {\rm Cov} (\psi_1(\bm X_{A_1}), \psi_2(\bm X_{A_2})) \le 0
  $$
  whenever $\psi_1$ and $\psi_2$ are coordinate-wise increasing such that the covariance exists.
\end{definition}

\begin{definition}
  {\rm \citep{Hu00}}\ \ A random vector $\bm X$ is said to be {\rm NSMD} if
  $$
      \E [\psi(\bm X)] \le \E [\psi(\bm X^\perp)],
  $$
  where $\bm X^\perp =(X_1^\perp, \ldots, X_n^\perp)$ is a random vector of independent random variables with $X_i^\perp \stackrel {d}{=} X_i$ for each $i\in [n]$, and $\psi$ is any supermodular function such that the expectations exist. A function $\psi:\R^n\to \R$ is said to be supermodular if
  $
    \psi(\bm x\vee \bm y) +\psi(\bm x\wedge \bm y)    \ge \psi (\bm x) + \psi(\bm y)
  $
   for all $\bm x, \bm y\in\R^n$, where $\vee$ is for componentwise maximum and $\wedge$ is for componentwise minimum, i.e.,
   $$
        \bm x\vee \bm y=(x_1\vee y_1, x_2\vee y_2, \ldots, x_n\vee y_n),\qquad
            \bm x \wedge \bm y =(x_1\wedge y_1, x_2\wedge y_2, \ldots, x_n\wedge y_n).
   $$
\end{definition}

\begin{definition}
  \label{def-nrd}
{\rm \citep{DR98}}\ \ Let ${\bm X}=(X_1, \ldots, X_n)$ be a random vector. $\bm X$ is said to be   \vspace*{-5pt}

\begin{itemize}
\item[{\rm (1)}] negatively regression dependent {\rm (NRD)} if
   \begin{equation} \label{eq-250424}
     \left [\bm X_I| \bm X_J=\bm x_J\right ]\ge_{\rm st} \left [\bm X_I| \bm X_J =\bm x_J^\ast\right ],
   \end{equation}
   where $\bm x_J\le \bm x_J^\ast$, and $I$ and $J$ are any disjoint subsets of $[n]$;

\item[{\rm (2)}] negatively left-tail dependent {\rm (NLTD)} if \eqref{eq-250424} is replaced by
  \begin{equation} \label{eq-250716}
    \left [\bm X_I|\bm X_J\le\bm x_J\right ]\ge_{\rm st} \left [\bm X_I|\bm X_J\le\bm x_J^\ast\right ];
  \end{equation}

\item[{\rm (3)}] negatively right-tail dependent {\rm (NRTD)} if \eqref{eq-250424} is replaced by
  \begin{equation} \label{eq-250717}
    \left [\bm X_I|\bm X_J>\bm x_J\right ]\ge_{\rm st}\left [\bm X_I| \bm X_J >\bm x_J^\ast\right ].
  \end{equation}
\end{itemize}
\end{definition}

It should be pointed out that, by limiting argument, the symbols ``$\le$'' and ``$>$'' in \eqref{eq-250716} and \eqref{eq-250717} can be replaced by ``$<$'' and ``$\ge$'', respectively. 

\begin{definition}
 {\rm \citep{JP83}}\ \ A random vector $\bm X$ is said to be negatively lower-orthant dependent {\rm (NLOD)} if $\P(\bm X\le \bm x) \le \prod^n_{i=1}\P(X_i\le x_i)$ for all $\bm x\in\R^n$, and negatively upper-orthant dependent {\rm (NUOD)} if $\P(\bm X> \bm x) \le \prod^n_{i=1}\P(X_i> x_i)$ for all $\bm x\in\R^n$. $\bm X$ is said to be negatively orthant dependent {\rm (NOD)} if $\bm X$ is both {\rm NLOD} and {\rm NUOD}.
\end{definition}

From Definition \ref{def-nrd}, it is known that $\bm X$ is NRD if and only if $-\bm X$ is NRD, and that $\bm X$ is NLTD if and only if $-\bm X$ is NRTD. In Definition \ref{def-nrd}, if $|J| =1$, the corresponding NRD, NLTD and NRTD are denoted by ${\rm NRD}_1$, ${\rm NLTD}_1$ and ${\rm NRTD}_1$ \citep{HY04}. ${\rm NRD}_1$ is also termed as \emph{negative dependence through stochastic ordering} in \cite{BSS85}. The implications among the above notions of negative dependence are as follows:
\begin{itemize}
 \item[(1)] ${\rm NRD}_1$ implies both ${\rm NLTD}_1$ and ${\rm NRTD}_1$ \citep[][Chapter 5]{BP81}, each of which in turn implies WNA \citep{CEW25}.

 \item[(2)] ${\rm NRD}_1$ does not imply NA \citep[][Remark 2.5]{JP83}.

 \item[(3)] NA implies NSMD \citep{CV04}.

 \item[(4)] NA does not imply NRD, NLTD, or NRTD (Example \ref{pr-241105}).

 \item[(5)] NRTD does not imply NRD or NLTD (Example \ref{ex-241109}).

 \item[(6)] Each of NA, WNA, NSMD, NRD, NLTD and NRTD implies that the NOD property holds.
\end{itemize}

As a corollary of Proposition 24, \cite{DR98} claimed that NRD implies both NLTD and NRTD. The proof of Proposition 24 contains a critical gap: the following implication was used without proof,
\begin{equation}
  \label{eq-250502}
  \bm X\ \hbox{is\ NRD}\ \Longrightarrow\ \[\bm X_I| \bm X_J=\bm x_J, X_k>x_k\] \ge_{\rm st} \[\bm X_I| \bm X_J=\bm x^\ast_J, X_k>x_k\]
\end{equation}
whenever $\bm x_J \le \bm x_J^\ast$, $x_k\in\R$, and $I$ and $J$ are any disjoint and proper subsets of $[n]\backslash \{k\}$. However, the foundational implication \eqref{eq-250502} is still unknown. Whether NRD implies both NLTD and NRTD remains unresolved. Another unresolved question is whether NRD implies NA.

\subsection{Tournaments}
\label{subsect-1.2}

A tournament consists of competitions among several players, in which each match involves two players. The following two types of tournaments are considered in this paper.

(1)\ \emph{General constant-sum round-robin tournaments} \citep{BF18,Moo23}. Assume that each of $n$ players competes against each of the other $n-1$ players. When player $i$ plays against player $j$, player $i$ gets a random score $X_{ij}$ having a distribution function $F_{ij}$ with support on $[0, r_{ij}]$, $r_{ij}>0$, and $X_{ji}=r_{ij} -X_{ij}$ for $i<j$. We assume that all ${n\choose 2}$ pairs of random scores $(X_{12}, X_{21}), \ldots, (X_{1n}, X_{n1}), \ldots$, $(X_{n-1,n}, X_{n,n-1})$ are independent. The total score for player $i$ is defined by $S_i=\sum_{j=1, j\ne i}^n X_{ij}$ for $i\in [n]$, and the sum of the total scores is constant $\sum_{i=1}^n S_i=\sum_{i<j} r_{ij}$. A simple round-robin tournament is a special case with $r_{ij}=1$ and $X_{ij}\in \{0,1\}$ for all $i<j$. \cite{Ros22} considered a special case with $r_{ij}$ being an integer and $X_{ij}\sim B(r_{ij}, p_{ij})$, which means that players $i$ and $j$ play $r_{ij}$ independent games, and player $i$ wins with probability $p_{ij}$.

(2)\ \emph{Knockout tournaments} \citep{ACKPR17, MR23}. Consider a knockout tournament with $n =2^\ell$  players, in which player $i$ defeats player $j$ independently of all other duels with probability $p_{ij}$ for all $1\le i\ne j\le n$. The winners of one round move to the next round, and the defeated players are eliminated from the tournament.  The tournament continues until all but one player is eliminated, with that player being declared the winner of the tournament. Let $S_i$ denote the number of games won by player $i\in [n]$.

\subsection{Motivation}

Negative dependence in tournaments has received attention in the literature. The property of \emph{negative orthant dependence} (NOD) was proved for different tournament models, with a special proof for each model; see, for example, \cite{MM22}. For general round-robin tournaments and knockout tournaments with random draws, \cite{MR23} unified and simplified many existing results in the literature by proving a stronger property, NA, a generalization leading to a simple proof. For a knockout tournament with a non-random draw, \cite{MR23} presented an example to illustrate that $\bm S$ is NOD but not NA. However, their proof is not correct. For more details, see the paragraph after Example \ref{ex-241109}.

The purpose of this note is to investigate negative regression dependence for two types of tournaments described in Subsection \ref{subsect-1.2}. More precisely, a counterexample is given in Section \ref{sect-2} to show that, for a general constant-sum round-robin tournament, $\bm S$ does not possess the property of NRD, NRTD and NLTD. In Section \ref{sect-3}, we establish the properties of NRD, NLTD and NRTD for a knockout tournament with a random draw and with players being of equal strength (Theorem \ref{pr-241104}) by proving that such properties are possessed by a random permutation (In fact, the random score vector $\bm S$ has a permutation distribution). For a knockout tournament with a non-random draw and with equal strength, we prove that $\bm S$ is NA (and hence NSMD) and NRTD (Theorems \ref{pr-241110} and \ref{pr-241111}), while $\bm S$ is, in general, not NRD or NLTD (Example \ref{ex-241109}). This is an interesting finding.

This paper is organized as follows. The models of round-robin and knockout tournaments are considered in Sections \ref{sect-2} and \ref{sect-3}, respectively.

\section{Constant-sum round-robin tournaments}
\label{sect-2}

For a general constant-sum round-robin tournament described in Subsection \ref{subsect-1.2}, \cite{MR23} proved that $\bm S=(S_1, S_2, \ldots, S_n)$ is NA. The next counterexample shows that $\bm S$ is not NRD, NLTD or NRTD.

\begin{example}
 \label{pr-241105}
Consider the case of three players $(n=3$), and let $X_{12}=1-X_{21}\sim B(1, 1/2)$, $X_{13}=5-X_{31}\sim U(\{0, 2, 5\})$ and $X_{23}=5-X_{32}\sim U(\{0,2, 5\})$, where $X_{12}$, $X_{13}$ and $X_{23}$ are independent, and $U(\{0,2,5\})$ is the discrete uniform distribution on $\{0, 2, 5\}$. Then $S_1=X_{12}+X_{13}$, $S_2=X_{21}+X_{23}$ and $S_3= X_{31}+ X_{32}$. Obviously, we have
\begin{align*}
  \P(S_3=0) & =\P(S_3=6) = \P(S_3=10) =\frac {1}{9},\\
   \P(S_3=3) & = \P(S_3=5) = \P(S_3=8) =\frac {2}{9}.
\end{align*}
Let $f: \N^2\to\R$ be an increasing and symmetric function satisfying that
\begin{align*}
  f(0,1) & =f(0,6)=f(1,2)=f(1,5)=1,\quad f(2,3) =f(5,6)=2.
\end{align*}
Then
\begin{align*}
  \E [f(S_1,S_2)|S_3=0] & =\E\big [f(5+X_{12}, 5+X_{21})\big |X_{13}=X_{23}=5\big ]\\
       & = \frac {1}{2} [f(5,6)+ f(6,5)] =2,\\
  \E [f(S_1,S_2)|S_3=3] & =\E \big [f(X_{13}+X_{12}, X_{23}+X_{21})\big | (X_{13}, X_{23})\in \{(5,2), (2,5)\}\big ]\\
       & = \frac {1}{4} [f(3,5)+ f(5,3)+ f(2,6)+f(6,2)] =2,\\
  \E [f(S_1,S_2)|S_3=5] & =\E \big [f(X_{13}+X_{12}, X_{23}+X_{21})\big | (X_{13}, X_{23})\in \{(5,0), (0,5)\}\big ]\\
       & = \frac {1}{4} [f(1,5)+ f(5,1)+ f(0,6)+f(6,0)] =1,\\
  \E [f(S_1,S_2)|S_3=6] & =\E \big [f(2+X_{12}, 2+X_{21})\big | X_{13}=2, X_{23}=2\big ]\\
       & = \frac {1}{2} [f(2,3)+ f(3,2)] =2,\\
  \E [f(S_1,S_2)|S_3=8] & =\E \big [f(X_{13}+X_{12}, X_{23}+X_{21})\big | (X_{13}, X_{23})\in \{(2,0), (0,2)\}\big ]\\
       & = \frac {1}{4} [f(1,2)+ f(2,1)+ f(0,3)+f(3,0)] =1,\\
  \E [f(S_1,S_2)|S_3=10] & =\E\big [f(X_{12}, X_{21})\big |X_{13}=X_{23}=0\big ]\\
       & = \frac {1}{2} [f(0,1)+ f(1,0)] =1.
\end{align*}
Hence,
\begin{align*}
    \E [f(S_1,S_2)|S_3=5] =1 & < 2=\E [f(S_1,S_2)|S_3=6],\\
    \E [f(S_1,S_2)|S_3 \le 5] = \frac {8}{5}  & < \frac {5}{3} =\E [f(S_1,S_2)|S_3 \le 6],\\
    \E [f(S_1,S_2)|S_3 \ge 5] = \frac {7}{6}  & < \frac {5}{4} = \E [f(S_1,S_2)|S_3\ge 6].
\end{align*}
This means that $(S_1, S_2, S_3)$ is not {\rm NRD}, {\rm NLTD} or {\rm NRTD}. \ \qed
\end{example}

\cite{Ros22} proved that $\bm S$ is ${\rm NRD}_1$ and, hence, ${\rm NLTD}_1$ and ${\rm NRTD}_1$ when all $X_{ij}$ are log-concave, that is, $X_{ij}$ has a log-concave probability density function on $\R$ or a log-concave probability mass function on $\mathbb{Z}$. It is still an open problem to investigate conditions on $F_{ij}$ under which $\bm S$ is NRD, NRTD or NLTD.

\section{Knockout tournaments}
\label{sect-3}

\subsection{Knockout tournaments with a random draw}

For a knockout tournament with $n=2^\ell$ players, a random draw means that in the first round, all $2^\ell$ players are randomly arranged into $2^{\ell-1}$ match pairs. The winners of these $2^{\ell-1}$ matches move to the second round, and they are randomly arranged into $2^{\ell-2}$ match pairs, and so on. Let $S_i$ denote the number of games won by player $i\in [n]$.

For a knockout tournament with a random draw, \cite{MR23} proved that $\bm S=(S_1, \ldots, S_n)$ is NA (and, hence, NSMD) when the players are of equal strength, that is, $p_{ij}=1/2$ for all $i\ne j$, and gave a counterexample to show that $\bm S$ is not NA without equal strength. This counterexample can also be used to illustrate that $\bm S$ is not NRD, NLTD or NRTD in a knockout tournament with a random draw and without equal strength.

\begin{example}
 \label{ex-241107}
Consider a knockout tournament with four players. Player $1$ beats player $2$ with probability $1$, and loses to players $3$ and $4$ with probability $1$. Player $2$ beats players $3$ and $4$ with probability $1$, and player $3$ beats player $4$ with probability $1$. With a random draw, according to which of the different players that Player 1 meets in the first round, we have
$$
  \bm S=\left \{\begin{array}{ll} (1, 0, 2, 0), & \hbox{with prob.}\ 1/3, \\
     (0, 2, 1, 0), & \hbox{with prob.}\ 1/3, \\
     (0, 2, 0, 1), & \hbox{with prob.}\ 1/3.  \end{array} \right.
$$
Then,
\begin{align*}
   \P(S_3=2 | S_1=1) & =1,\quad \P(S_3=0 | S_1=0) = \P(S_3=1 | S_1=0) =\frac {1}{2},
\end{align*}
which implies that
\begin{align*}
   \E [S_3|S_1=0] & =\frac {1}{2} < 2=\E [S_3 |S_1=1],\\
   \E [S_3|S_1\le 0] & =\frac {1}{2} < 1=\E [S_3]=\E [S_3|S_1\le 1],\\
   \E [S_3|S_1\ge 0] & =1 < 2=\E [S_3|S_1\ge 1].
\end{align*}
This means that $\bm S$ is not {\rm NRD}, {\rm NLTD} or {\rm NRTD}. If the probability of $1$ is replaced by $1-\ep$ for small $\ep>0$, then the same result holds by a continuity argument.  \ \qed
\end{example}

Under the assumption that players have equal probabilities in each duel, the NRD, NLTD and NRTD properties hold for $\bm X$ as stated in the next theorem.

\begin{theorem}
  \label{pr-241104}
Consider a knockout tournament with $n=2^\ell$ players of equal strength. If the schedule of matches is random, then $\bm S$ is {\rm NRD}, {\rm NLTD} and {\rm NRTD}.
\end{theorem}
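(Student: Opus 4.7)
The plan is to reduce the theorem to a statement about uniformly random permutations. Because $p_{ij}=1/2$ for all $i\ne j$ and the draw is uniform over brackets, the joint law of $\bm S$ is invariant under every relabelling of the $n$ players, while the multiset $\{S_1,\ldots,S_n\}$ is the deterministic knockout profile consisting of $2^{\ell-1-k}$ copies of $k$ for $k=0,\ldots,\ell-1$ together with a single copy of $\ell$. Hence $\bm S$ is distributed as a uniformly random permutation of a fixed vector $\bm a=(a_1,\ldots,a_n)$, and it suffices to prove the general fact: whenever $\bm X=(a_{\pi(1)},\ldots,a_{\pi(n)})$ with $\pi$ uniform on the symmetric group on $[n]$, the vector $\bm X$ is NRD, NLTD and NRTD.

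For NRD I would use the residual-multiset description: conditional on $\bm X_J=\bm y_J$, the subvector $\bm X_I$ is a uniformly random permutation of the multiset $\bm a\ominus \bm y_J$ obtained by deleting the entries of $\bm y_J$ from $\bm a$ (with multiplicity). If $\bm y_J\le \bm y_J^\ast$ componentwise, then for every threshold $c\in\R$ the count $\#\{j\in J:y_j\le c\}\ge \#\{j\in J:y_j^\ast\le c\}$, which forces $\mathrm{sort}(\bm a\ominus \bm y_J)\ge \mathrm{sort}(\bm a\ominus \bm y_J^\ast)$ coordinatewise. Applying a common random permutation to both sorted vectors produces a monotone coupling certifying $[\bm X_I\mid \bm X_J=\bm y_J]\ge_{\mathrm{st}}[\bm X_I\mid \bm X_J=\bm y_J^\ast]$.

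For NLTD and NRTD I would combine NRD with two structural facts: (i) by exchangeability of $\bm X$, the function $g(\bm y):=\E[\varphi(\bm X_I)\mid \bm X_J=\bm y]$ depends only on the unordered multiset of $\bm y$ for any increasing $\varphi$, and (ii) by NRD, $g$ is decreasing in that multiset. Since $-\bm X$ is itself a uniformly random permutation (of $-\bm a$), and $\bm X$ is NRTD iff $-\bm X$ is NLTD, it suffices to prove NLTD. Decomposing $\{\bm X_J\le \bm x_J^\ast\}=\{\bm X_J\le \bm x_J\}\cup E$ with $E=\{\bm X_J\le \bm x_J^\ast\}\setminus\{\bm X_J\le \bm x_J\}$, the claim reduces to the inequality $\E[\varphi(\bm X_I)\mid \bm X_J\le \bm x_J]\ge \E[\varphi(\bm X_I)\mid E]$; I would prove this by peeling off the coordinates of $\bm x_J^\ast-\bm x_J$ one at a time and using the multiset symmetry of $g$ together with a pair-swap coupling between admissible orderings of $\bm X_J$ in the two events.

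The main obstacle is the NLTD step. The naive attempt to deduce NLTD from NRD by asserting $[\bm X_J\mid \bm X_J\le \bm x_J]\le_{\mathrm{st}}[\bm X_J\mid \bm X_J\le \bm x_J^\ast]$ in the multivariate usual stochastic order fails -- this dominance does not hold for random permutations in general (a concrete two-coordinate counterexample exists already with $\bm a=(0,1,2,3)$, $J=\{1,2\}$, $\bm x_J=(1,0)$, $\bm x_J^\ast=(1,2)$). The multiset symmetry of $g$ is precisely what rescues the argument, since only a symmetrised stochastic comparison is needed, and this can be achieved by the pair-swap construction above. Handling ties in $\bm a$ -- which are substantial in the knockout profile -- adds some bookkeeping but does not alter the structure of the proof.
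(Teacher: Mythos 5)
Your reduction of the theorem to the statement that a random permutation is NRD/NLTD/NRTD is exactly the paper's route (its Lemma \ref{le-241110}), and your NRD argument via sorted residual multisets and a common-permutation coupling is correct and essentially identical to the paper's step (1). Your diagnosis of the obstacle for the tail properties is also correct: the conditional laws $[\bm X_J\mid \bm X_J\le \bm x_J]$ and $[\bm X_J\mid \bm X_J\le \bm x_J^\ast]$ are genuinely not comparable in the multivariate usual stochastic order (your $\bm a=(0,1,2,3)$ example checks out), so NLTD cannot be obtained from NRD by mixing, and only the multiset symmetry of $g(\bm y)=\E[\varphi(\bm X_I)\mid \bm X_J=\bm y]$ can rescue the argument. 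The union decomposition and the reduction to changing one coordinate of $\bm x_J$ at a time are likewise valid.

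The gap is that the step carrying all the difficulty is never actually proved. After peeling to a single coordinate $j_0$ and decomposing over the value of $X_{j_0}$, your claim is equivalent to showing that $h(v)=\E\big[\varphi(\bm X_I)\mid X_{j_0}=v,\ \bm X_{J\setminus\{j_0\}}\le \bm x_{J\setminus\{j_0\}}\big]$ is decreasing in $v$. This mixes an equality constraint with orthant constraints, so it does not follow from NRD --- it is precisely the type of implication the paper flags as unresolved in general (cf.\ its discussion of \eqref{eq-250502}). Equivalently, for $v<v'$ one must compare random permutations $\bm Y$ of $\bm a\ominus\{v\}$ and $\bm Z$ of $\bm a\ominus\{v'\}$, two ground multisets differing in a single element, under the \emph{same} orthant conditioning; this is the paper's auxiliary claim \eqref{eq-241220-3}. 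Your ``pair-swap coupling'' does not deliver it: replacing the one large element of $\bm Y$ by the small element gives $\bm Z\le\bm Y$ pointwise, but then the two conditioning events differ --- the permutations in which the swapped element itself lands in a $J$-coordinate and clears its threshold lie in one event and not the other --- so the conditional laws are not coupled. Handling exactly these extra permutation classes is what the paper's proof consists of: a simultaneous induction proving the equality-mixed statement \eqref{eq-241220-2} and the two-multiset comparison \eqref{eq-241220-3} together, with the conditioning event split into classes $\Pi_0,\Pi_1,\ldots,\Pi_m$ and the auxiliary inequality \eqref{eq-241220-4} controlling the extra classes; the structure is inherently recursive, since the extra-class bound at level $m$ uses the equality-mixed statement at level $m$, which in turn uses the multiset comparison at level $m-1$. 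As written, your proposal names the ingredients (symmetry, peeling, swapping) but supplies no argument of this kind, so the NRTD/NLTD half of the theorem --- its main content --- remains unproved.
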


\begin{proof}
As pointed out by \cite{MR23} in the proof of their Proposition 2, the vector $\bm S$ is a random permutation of the following vector
$$
  \Big (\underbrace{0, \ldots, 0}_{2^{\ell-1}}, \underbrace{1, \ldots, 1}_{2^{\ell-2}},\ldots, \underbrace{k,\ldots, k}_{2^{\ell-k-1}}, \ldots, \underbrace{\ell-1}_{1}, \ell \Big ),
$$
in which the component $k$ ($k\in \{0, 1, \ldots, \ell-1$) appears $2^{\ell-k-1}$ times, and the component $\ell$ appears once. The desired result now follows from Lemma \ref{le-241110} below.
\end{proof}

A vector $\bm X=(X_1, \ldots, X_n)$ is a random permutation of $\bm x=(x_1, \ldots, x_n)$ if $\bm X$ takes as values of all $n!$ permutations of $\bm x$ with probability $1/n!$, where $x_1, \ldots, x_n$ are any real numbers. Throughout, when we write $[\bm W| \bm W\in A]$ for a random vector (variable) and a suitable chosen set $A$, it is always assumed that $\P(\bm W\in A)>0$.

\begin{lemma}
  \label{le-241110}
A random permutation is {\rm NRD}, {\rm NLTD} and {\rm NRTD}.
\end{lemma}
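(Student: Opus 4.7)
The plan is to reduce to the canonical case of a uniform random permutation of $[n]$, establish NRD by a direct coupling, then obtain NRTD by induction on $|J|$ via a mixture argument; NLTD follows by the symmetry $\bm x\mapsto-\bm x$.

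First, after sorting and a small continuity perturbation, assume $x_1<\cdots<x_n$ and write $X_i=x_{R_i}$, where $\bm R=(R_1,\ldots,R_n)$ is a uniform random permutation of $[n]$. Since $r\mapsto x_r$ is coordinatewise increasing, all three properties for $\bm X$ follow from the corresponding properties for $\bm R$; in addition, $-\bm X$ is itself a random permutation of $-\bm x$, so NLTD reduces to NRTD applied to this reversed vector.

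For NRD, given $\bm R_J=\bm r_J$, $\bm R_{[n]\setminus J}$ is uniform on permutations of $[n]\setminus\{\bm r_J\}$. An easy one-coordinate exchange shows that if $\bm r_J\le\bm r_J^\ast$ componentwise, then the sorted versions of the complementary sets satisfy $[n]\setminus\{\bm r_J\}\ge[n]\setminus\{\bm r_J^\ast\}$ componentwise (removing a smaller element leaves a pointwise larger sorted remainder than removing a larger one). Sampling both conditional distributions through the same uniform permutation $\pi$ of the $n-|J|$ remaining positions gives $\bm R_I^{(1)}(\pi)\ge\bm R_I^{(2)}(\pi)$ almost surely, hence NRD.

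For NRTD I induct on $|J|$. The base case $|J|=1$ follows directly from NRD: the family $[\bm R_I | R_j=r]$ is stochastically decreasing in $r$, and $[\bm R_I | R_j>c]$ is the uniform mixture of these conditionals over $r\in\{c+1,\ldots,n\}$; raising the threshold from $c$ to $c^\ast$ removes precisely the smallest values of $r$, which contribute the stochastically largest components, so the mixture for $c$ dominates. For $|J|>1$, I raise one coordinate of $\bm c_J$ at a time, so that the remaining coordinates of $J$ contribute a fixed upper-orthant event $E$; repeating the base-case mixture argument then reduces NRTD to the claim that the family $[\bm R_I | R_{j_0}=r, E]$ is stochastically decreasing in $r$.

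The main obstacle is this last conditional-NRD claim. The shared-$\pi$ coupling of NRD still provides pointwise domination $\bm R^{(r_1)}(\pi)\ge\bm R^{(r_2)}(\pi)$ for $r_1\le r_2$, which makes the conditioning events nested, $\{\bm R^{(r_2)}\in E\}\subset\{\bm R^{(r_1)}\in E\}$; but the two conditional distributions are then supported on different $\pi$-sets, so pointwise coupling alone does not finish the argument. I expect the fix to come from decomposing $\{\bm R^{(r_1)}\in E\}\setminus\{\bm R^{(r_2)}\in E\}$ according to which coordinate of $E$ is ``freshly violated'' when passing from $r_1$ to $r_2$, and applying the unconditional NRD once more inside each piece to show that the conditional $\bm R_I$ on this extra region still stochastically dominates $\bm R_I$ conditioned on $\{\bm R^{(r_2)}\in E\}$, so that the overall mixture preserves the required order.
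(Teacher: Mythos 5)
Your treatment of NRD, of NLTD via $\bm x\mapsto -\bm x$, and your reduction of NRTD (first by the Barlow--Proschan mixture argument over the conditioned coordinate, then by raising one threshold at a time) all match the paper's proof and are sound: you arrive at exactly the paper's key claim \eqref{eq-241220-2}, that $\E\big[\psi(\bm X_{[n]\backslash [k]})\,|\,X_1=r_1,\ \bm X_{[k]\backslash\{1\}}\ge \bm r_{[k]\backslash\{1\}}\big]$ is decreasing in $r_1$. But at precisely that point your proof stops being a proof: you acknowledge the obstacle and only say you ``expect the fix'' to come from decomposing the extra event by the freshly violated coordinate and ``applying the unconditional NRD once more inside each piece.'' That fix cannot work as stated, because the inequality needed inside each such piece is not an instance of unconditional NRD: on the piece where coordinate $i$ is freshly violated, what must be shown is an inequality of the form
\begin{equation*}
  \E\big[\psi(\bm Z_{[n]\backslash[m]})\,\big|\,Z_i=c_1,\ \bm Z_{[m]\backslash\{i\}}\ge \bm x_{[m]\backslash\{i\}}\big]
  \ \ge\ \E\big[\psi(\bm Z_{[n]\backslash[m]})\,\big|\,\bm Z_{[m]}\ge \bm x_{[m]}\big],
\end{equation*}
i.e.\ the paper's \eqref{eq-241220-4}, which is a conditional statement of exactly the same type as the claim you are trying to establish. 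Your sketch is therefore circular: the decomposition reduces conditional NRD to conditional NRD.

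The missing idea, which is the heart of the paper's proof, is an auxiliary statement that breaks this circularity: compare two permutation distributions $\bm Y$ and $\bm Z$ built on vectors $\bm b\ge\bm c$ that differ in a \emph{single} coordinate, conditioned on upper-orthant events (the paper's \eqref{eq-241220-3}), and prove \eqref{eq-241220-2} and \eqref{eq-241220-3} \emph{jointly} by induction on $k$. The induction runs as follows: \eqref{eq-241220-3} at level $m-1$ implies \eqref{eq-241220-2} at level $m$, because conditioning a random permutation of $[n]$ on $X_1=r_1$ versus $X_1=r_1^\ast$ produces permutation distributions on $[n]\backslash\{r_1\}$ and $[n]\backslash\{r_1^\ast\}$, two multisets differing in exactly one element; then \eqref{eq-241220-2} at level $m$ feeds into your ``freshly violated coordinate'' decomposition (the paper's sets $\Pi_0$ and $\Pi_i$) to yield \eqref{eq-241220-3} at level $m$. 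Without this two-statement induction, or some substitute for it, your argument for NRTD is incomplete at its central step.
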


\begin{proof}
Let $\bm X$ be a random vector with permutation distribution on $\Lambda=\{x_1, x_2, \ldots, x_n\}$. First consider the special case the $x_i$ are distinct. Hence, without loss of generality, assume that $\Lambda=[n]$.

\begin{itemize}
  \item[(1)] To prove NRD property of $\bm X$, it suffices to prove that, for any increasing function $\psi: \R^{n-k}\to \R$, $\E [\psi(\bm X_{[n]\backslash [k]}) | \bm X_{[k]}=\bm r_{[k]}]$ is decreasing in $\bm r_{[k]}$, where $k\in [n-1]$. Without loss of generality, assume that $\psi$ is symmetric since the distribution of $\bm X$ is symmetric. For suitably chosen $\bm r_{[k]}$ and $\bm r_{[k]}'$ such that $\bm r_{[k]}\le \bm r_{[k]}'$, denote $\{s_j, j\in [n-k]\}=[n]\backslash \{r_i, i\in [k]\}$ and  $\{s_j', j\in [n-k]\}= [n]\backslash \{r_i', i\in [k]\}$. Then $s_{(j)}\ge s'_{(j)}$ for $j\in [n-k]$, where $s_{(1)}\le s_{(2)}\le \cdots\le s_{(n-k)}$ denotes the ordered values of $\{s_j, j\in [n-k]\}$. Therefore,
     \begin{align*}
       \E\left [\psi(\bm X_{[n]\backslash [k]})\,|\,\bm X_{[k]}=\bm r_{[k]}\right ] & =\psi(\bm s_{[n-k]})=\psi(s_{(1)},\ldots, s_{(n-k)}) \\
         & \ge \psi(s'_{(1)}, \ldots, s'_{(n-k)}) =\E\left [\psi(\bm X_{[n]\backslash [k]})\,|\,\bm X_{[k]}=\bm r'_{[k]}\right ],
      \end{align*}
      which implies $X$ is NRD.

  \item[(2)] To prove NRTD property of $\bm X$, it suffices to prove that, for any increasing and symmetric function $\psi:\R^{n-k}\to \R$, the function $\E \[\psi(\bm X_{[n]\backslash [k]}) | \bm X_{[k]}\ge \bm r_{[k]}\]$ is decreasing in $\bm r_{[k]}$, where $1\le k<n$. By symmetry of the distribution of $\bm X$, this is also equivalent to verify that
      \begin{equation}
        \label{eq-241220-1}
        \E\big [\psi(\bm X_{[n]\backslash [k]})\,|\, X_1\ge r_1, \bm X_{[k]\backslash \{1\}}\ge \bm r_{[k]\backslash \{1\}}\big ] \ge
        \E\big [\psi(\bm X_{[n]\backslash [k]})\,|\, X_1\ge r_1^\ast, \bm X_{[k]\backslash \{1\}}\ge \bm r_{[k]\backslash \{1\}}\big ]
      \end{equation}
      whenever $r_1<r_1^\ast$. To prove \eqref{eq-241220-1}, by a similar argument to that in the proof of Theorem 5.4.2 in \cite{BP81}, it is required to show that
      \begin{equation}
        \label{eq-241220-2}
        \E\big [\psi(\bm X_{[n]\backslash [k]})\,|\, X_1= r_1, \bm X_{[k]\backslash \{1\}}\ge \bm r_{[k]\backslash \{1\}}\big ] \ge
        \E\big [\psi(\bm X_{[n]\backslash [k]})\,|\, X_1= r_1^\ast, \bm X_{[k]\backslash \{1\}}\ge \bm r_{[k]\backslash \{1\}}\big ],
      \end{equation}
      where $r_1\in [n]$, $r_1^\ast\in [n]$ such that $r_1<r_1^\ast$. For $k=0$, both sides in \eqref{eq-241220-2} reduce to $\E [\psi(\bm X_{[n]})]$, an unconditional expectation. We consider this special case $k=0$ for convenience of the following proof by induction.

      Let $\bm b=(b_1, \ldots, b_n)$ and $\bm c=(c_1, \ldots, c_n)$ be any two real vectors satisfying that $b_1>c_1$ and $b_i=c_i$ for $i\in [n]\backslash \{1\}$, and let $\bm Y$ and $\bm Z$ be two random vectors having respective permutation distributions on $\bm b$ and $\bm c$. We claim that
      \begin{equation}
        \label{eq-241220-3}
        \E\big [\psi(\bm Y_{[n]\backslash [k]})\,|\,\bm Y_{[k]}\ge \bm x_{[k]}\big ] \ge
        \E\big [\psi(\bm Z_{[n]\backslash [k]})\,|\,\bm Z_{[k]}\ge \bm x_{[k]}\big ]
      \end{equation}
      for $k\in [n-1]$ and any $\bm x_{[k]}$. Now, we prove \eqref{eq-241220-2} and \eqref{eq-241220-3} synchronously by induction on $k$. For $k=0$, \eqref{eq-241220-2} is trivial, and
      \begin{align*}
        \E\big [\psi(\bm Y_{[n]\backslash [k]})\,|\,\bm Y_{[k]}\ge \bm r_{[k]}\big ] =\psi(\bm b)
          \ge \psi(\bm c)=\E\big [\psi(\bm Z_{[n]\backslash [k]})\,|\,\bm Z_{[k]}\ge \bm r_{[k]}\big ],
      \end{align*}
      implying \eqref{eq-241220-3}. That is, \eqref{eq-241220-2} and \eqref{eq-241220-3} hold for $k=0$. Assume that \eqref{eq-241220-3} holds for $k=m-1$. For $k=m$, it is easy to see that
     \begin{align}
      \big [\bm X_{[n]\backslash [m]}\,|\, X_1=r_1,\bm X_{[m]\backslash \{1\}}\ge\bm r_{[m]\backslash \{1\}}\big]
        & \stackrel {d}{=}\left [\widetilde{\bm Y}_{[n-1]\backslash [m-1]}\,|\,\widetilde {\bm Y}_{[m-1]}\ge\bm r_{[m]\backslash \{1\}}\], \label{eq-250503} \\
           \big [\bm X_{[n]\backslash [m]}\,|\, X_1=r_1^\ast,\bm X_{[m]\backslash \{1\}} \ge \bm r_{[m]\backslash \{1\}}\big ]  & \stackrel {d}{=}\[\widetilde{\bm Z}_{[n-1]\backslash [m-1]}\,|\,\widetilde{\bm Z}_{[m-1]}\ge\bm r_{[m]\backslash \{1\}}\],  \label{eq-250504}
     \end{align}
     where $\widetilde{\bm Y}$ has a permutation distribution on $[n]\backslash\{r_1\}$, and $\widetilde{\bm Z}$ has a permutation distribution on $[n]\backslash\{r_1^\ast\}$. Thus, \eqref{eq-241220-2} holds for $k=m$ by applying the induction assumption \eqref{eq-241220-3} with $k=m-1$ to \eqref{eq-250503} and \eqref{eq-250504}. Therefore, by the symmetry of the distribution of $\bm Z$, we conclude from \eqref{eq-241220-2} with $k=m$ that
      $$
        \E\big [\psi(\bm Z_{[n]\backslash [m]})\,|\, Z_i= c_1, \bm Z_{[m]\backslash \{i\}}\ge \bm x_{[m]\backslash \{i\}}\big ] \ge \E\big [\psi(\bm Z_{[n]\backslash [m]})\,|\, Z_i=c_1^\ast, \bm Z_{[m]\backslash \{i\}}\ge \bm x_{[m]\backslash \{i\}}\big ]
     $$
     when $c_1< c_1^\ast$ and $i\in [m]$. Consequently, we have
     \begin{equation}
        \label{eq-241220-4}
        \E\big [\psi(\bm Z_{[n]\backslash [m]})\,|\, Z_i= c_1, \bm Z_{[m]\backslash \{i\}}\ge \bm x_{[m]\backslash \{i\}}\big ] \ge \E\big [\psi(\bm Z_{[n]\backslash [m]})\,|\,\bm Z_{[m]}\ge \bm x_{[m]}\big ]
     \end{equation}
     when $c_1<x_i$ for $i\in [m]$. Next, we show \eqref{eq-241220-3} for $k=m$. To this end, denote by $\mathscr{O}_n$ the set of all permutations on $[n]$. For each $\pi=(\pi(1), \ldots, \pi(n))\in\mathscr{O}_n$ and $\bm x=(x_1,\ldots, x_n)\in\R^n$, denote $\bm x^\pi=(x_{\pi(1)}, \ldots, x_{\pi(n)})$. Define the following sets of permutations on $[n]$ as follows
     \begin{align*}
       \Pi_0 &  = \left\{\pi\in\mathscr{O}_n: \bm c^\pi_{[m]}\ge \bm x_{[m]} \right \},\\
       \Pi_i & =\left \{\pi\in\mathscr{O}_n: \pi(i)=1, \bm b^\pi_{[m]} \ge \bm x_{[m]}, c_1< x_i\right \},\quad i\in [m].
     \end{align*}
     Then,
     \begin{align*}
       \left \{\bm Y_{[m]} \ge \bm x_{[m]}\right \} &= \bigcup^m_{i=0} \bigcup_{\pi\in\Pi_i} \{\bm Y=\bm b^\pi\},
       \qquad \left \{\bm Z_{[m]} \ge \bm x_{[m]}\right \} =\bigcup_{\pi\in\Pi_0} \{\bm Z=\bm c^\pi\}.
     \end{align*}
     Thus, we have
     \begin{equation}
      \label{eq-241220-5}
        \E \[\psi\(\bm Y_{[n]\backslash [m]}\) | \bm Y_{[m]}\ge \bm x_{[m]}\] = \frac {\sum^m_{i=0} \sum_{\pi\in \Pi_i} \P(\bm Y=\bm b^\pi)\, \psi \big (\bm b^\pi_{[n]\backslash [m]}\big )} {\sum^m_{i=0} \sum_{\pi\in \Pi_i} \P(\bm Y=\bm b^\pi)}.
     \end{equation}
     Since $\bm b\ge \bm c$ and $\psi$ is increasing, it follows that
     \begin{align}
       \frac {\sum_{\pi\in \Pi_0} \P(\bm Y=\bm b^\pi)\, \psi (\bm b^\pi_{[n]\backslash [m]} )} {\sum_{\pi\in \Pi_0} \P(\bm Y=\bm b^\pi)}
       & \ge \frac {\sum_{\pi\in \Pi_0} \P(\bm Y=\bm c^\pi)\, \psi \big(\bm c^\pi_{[n]\backslash [m]}\big )} {\sum_{\pi\in \Pi_0} \P(\bm Y=\bm c^\pi)} \nonumber \\
       & = \frac {\sum_{\pi\in \Pi_0} \P(\bm Z=\bm c^\pi)\, \psi \big (\bm c^\pi_{[n]\backslash [m]}\big )} {\sum_{\pi\in \Pi_0} \P(\bm Z=\bm c^\pi)} \nonumber \\
       & = \E \[\psi\(\bm Z_{[n]\backslash [m]}\) | \bm Z_{[m]} \ge \bm x_{[m]}\].    \label{eq-241220-6}
     \end{align}
     Noting that $\pi(i)=1$ for each $i\in [m]$ such that $\Pi_i \ne \emptyset$, and that $\bm b^{\pi}_{[n]\backslash [m]} = c^{\pi}_{[n]\backslash [m]}$, we have
     \begin{align}
       \frac {\sum_{\pi\in \Pi_i} \P(\bm Y=\bm b^\pi)\, \psi \big (\bm b^\pi_{[n]\backslash [m]}\big )} {\sum_{\pi\in \Pi_i} \P(\bm Y=\bm b^\pi)}
       & = \frac {\sum_{\pi\in \Pi_i} \P(\bm Z=\bm c^\pi)\,\psi\big (\bm c^\pi_{[n]\backslash [m]}\big )} {\sum_{\pi\in \Pi_i} \P(\bm Z=\bm c^\pi)} \nonumber \\
       & = \E \[\psi\(\bm Z_{[n]\backslash [m]}\) | Z_i=c_1, \bm Z_{[m]\backslash\{i\}} \ge \bm x_{[m]\backslash\{i\}}\]\nonumber \\
       & \ge \E\big [\psi(\bm Z_{[n]\backslash [m]})\,|\,\bm Z_{[m]}\ge \bm x_{[m]}\big ],    \label{eq-241220-7}
     \end{align}
     where the last inequality follows from \eqref{eq-241220-4}. In view of \eqref{eq-241220-6} and \eqref{eq-241220-7}, it follows from \eqref{eq-241220-5} that
     $$
         \E \[\psi\(\bm Y_{[n]\backslash [m]}\)\,|\,\bm Y_{[m]}\ge \bm x_{[m]}\] \ge  \E\big [\psi(\bm Z_{[n]\backslash [m]})\,|\,\bm Z_{[m]}\ge \bm x_{[m]}\big ],
     $$
     which implies that \eqref{eq-241220-3} holds for $k=m$. Therefore, the desired results \eqref{eq-241220-2} and \eqref{eq-241220-3} hold by induction. This proves that $\bm X$ is NRTD.

  \item[(3)] The NLTD property of $\bm X$ follows from the facts that $-\bm X$ also has a permutation distribution, and that $\bm X$ is NLTD if and only if $-\bm X$ is NRTD.
\end{itemize}

Finally, consider the general case $\Lambda=\{x_1, \ldots, x_n\}$ with $x_i=x_j$ for at least one pair $i\ne j$. Careful check yields the above proof for the special case is still valid for the general case. This proves the desired result.
\end{proof}

From the proof of Lemma \ref{le-241110}, we conclude that if $\bm Y$ and $\bm Z$ have respective permutation distributions on $\bm b$ and $\bm c$ with $\bm b, \bm c \in \R^n$ such that $\bm b\ge \bm c$, then
\begin{align*}
    \big [\bm Z_L | \bm Z_I \ge \bm x_I\big ] & \le_{\rm st} \big [\bm Y_L | \bm Y_I \ge \bm x_I\big ],  \\
    \big [\bm Z_L | \bm Z_I \le \bm x_I\big ] & \le_{\rm st} \big [\bm Y_L | \bm Y_I \le \bm x_I\big ],
\end{align*}
for $\bm x\in\R^n$, where $I$ and $L$ are two disjoint proper subsets of $[n]$. In fact, we have the following conjecture.

\begin{conjecture}
Let $I, J, K$ and $L$ be four disjoint subsects of $[n]$, where one or two of $I$, $J$ and $K$ may be an empty set. If $\bm X$ is a random vector with permutation distribution on $\{a_1, \ldots, a_n\}$, then, for any increasing function $\psi:\R^{|L|}\to \R$ and any suitable chosen $\bm x_I, \bm x_J$ and $\bm x_K$,
$$
       \E \big [\psi(\bm X_L)| \bm X_I\ge \bm x_I, \bm X_J\le \bm x_J, \bm X_K= \bm x_K\big ]
$$
is decreasing in $\bm x_I, \bm x_J$ and $\bm x_K$.
\end{conjecture}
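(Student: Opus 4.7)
Joint monotonicity in $(\bm x_I,\bm x_J,\bm x_K)$ is equivalent to separate monotonicity in each argument with the others held fixed, and by a telescoping argument each such monotonicity reduces to a single-coordinate upward shift. Three subcases therefore arise: an increase in one component of $\bm x_K$ (equality constraint), of $\bm x_I$ (right-tail), or of $\bm x_J$ (left-tail). The $\bm x_J$ subcase follows from the $\bm x_I$ subcase via the $\bm X\mapsto -\bm X$ duality (with $I$ and $J$ swapped) exactly as in part (3) of the proof of Lemma \ref{le-241110}, so only two subcases require detailed treatment.

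\textbf{A master comparison for the $\bm x_K$ subcase.} Conditional on $X_k=v$, the vector $\bm X_{[n]\backslash \{k\}}$ has a permutation distribution on $\Lambda\backslash \{v\}$, and replacing $v$ by any $v'>v$ yields a permutation distribution on $\Lambda\backslash \{v'\}$ whose sorted version is componentwise dominated by that of $\Lambda\backslash \{v\}$. Hence this subcase would follow from the following master inequality, which extends \eqref{eq-241220-3}: \emph{if $\bm b,\bm c\in\R^n$ satisfy $\bm b\ge \bm c$ componentwise, and $\bm Y,\bm Z$ are random permutations of the components of $\bm b,\bm c$, then}
$$
  \E\[\psi(\bm Y_L)\,\big|\,\bm Y_I\ge \bm x_I,\ \bm Y_J\le \bm x_J\]\ \ge\ \E\[\psi(\bm Z_L)\,\big|\,\bm Z_I\ge \bm x_I,\ \bm Z_J\le \bm x_J\]
$$
\emph{for any pairwise disjoint $I,J,L\subset [n]$ and any increasing $\psi$.} The proposed proof is a double induction on $n$ and on $|I|+|J|$: first reduce to the case where $\bm b$ and $\bm c$ differ in a single coordinate, then classify admissible permutations by the position of the differing value in parallel with \eqref{eq-241220-5}--\eqref{eq-241220-7}, and finally peel off one element of $I\cup J$ via the conditioning identities \eqref{eq-250503}--\eqref{eq-250504}.

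\textbf{Main obstacle.} The new difficulty is the simultaneous presence of right-tail and left-tail constraints. When the differing value $b_1$ lands in an $I$-position, the pattern of \eqref{eq-241220-6}--\eqref{eq-241220-7} applies: the permutation may be admitted for $\bm Y$ while rejected for $\bm Z$, producing surplus mass on the $\bm Y$-side that favors the desired inequality. When $b_1$ lands in an $L$-position, monotonicity of $\psi$ again favors $\bm Y$. When $b_1$ lands in a $J$-position, however, the opposite happens: $b_1$ may violate $\bm Y_J\le \bm x_J$ while $c_1$ satisfies $\bm Z_J\le \bm x_J$, so surplus mass appears on the $\bm Z$-side and points in the adverse direction. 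Controlling this adverse $J$-contribution is where the proof of Lemma \ref{le-241110} cannot be transplanted unchanged; I would expect to need a companion ``dual'' inequality---established in lockstep by the same induction on $|I|+|J|$---that bounds the $X_j=c_1$ slice of the $\bm Z$-average from above in terms of the full $\bm Z$-average, so that the $\bm Y$-favoring surplus from $I$- and $L$-positions, together with the $\psi$-monotonicity gain on the matched permutations, always absorbs the adverse $J$-surplus.

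\textbf{The $\bm x_I$ subcase and conclusion.} Given the master inequality, single-coordinate monotonicity in a chosen $x_i$ of $\bm x_I$, with $\bm x_J$ and $\bm x_K$ fixed, can be extracted by the tail-to-equality reduction used in passing from \eqref{eq-241220-1} to \eqref{eq-241220-2}: write the tail-conditional expectation as an integral of the equality-conditional expectation against the conditional distribution of $X_i$, and apply the master inequality fiberwise to compare the integrands after the shift $x_i\mapsto x_i^\ast$. Combining the three monotonicities---in $\bm x_K$ via the master inequality directly, in $\bm x_I$ via this reduction, and in $\bm x_J$ via the $\bm X\mapsto -\bm X$ duality---would yield the conjecture.
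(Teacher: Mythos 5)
First, note that the paper does not prove this statement at all: it is stated as an open conjecture, offered after the remark following Lemma \ref{le-241110}, which only establishes the two pure cases (all constraints of the form $\bm X_I\ge \bm x_I$, or all of the form $\bm X_J\le \bm x_J$) separately. So there is no "paper proof" to match; the only question is whether your argument closes the conjecture, and it does not. Your reductions are sound as far as they go: the telescoping to single-coordinate shifts, the observation that conditioning on $X_k=v$ versus $v'>v$ yields permutation distributions on $\Lambda\backslash\{v\}$ versus $\Lambda\backslash\{v'\}$ with the right componentwise dominance of sorted remainders, and the $\bm X\mapsto -\bm X$ duality that swaps the roles of $I$ and $J$ (once one notes it must be applied to the full mixed-constraint statement, not to a pure-$\ge$ statement). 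But all of these only show that the conjecture is equivalent to your "master inequality" with simultaneous right-tail and left-tail constraints, and that inequality is precisely where the mathematical content lies --- and it is exactly the step you leave unproved.

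Concretely, the gap is the adverse $J$-surplus you yourself identify. In the paper's proof of \eqref{eq-241220-3}, the decomposition \eqref{eq-241220-5}--\eqref{eq-241220-7} works because all surplus permutation mass (permutations admitted for $\bm Y$ but not for $\bm Z$) sits on the $\bm Y$-side, and each surplus block average is bounded below by the full $\bm Z$-average via \eqref{eq-241220-4}. With a $\le$-constraint present, a permutation placing the differing value $b_1>c_1$ at a position $j\in J$ can satisfy $Z_j=c_1\le x_j$ while violating $Y_j=b_1\le x_j$, so surplus mass appears on the $\bm Z$-side; moreover, because the small value $c_1$ is then "used up" at the $J$-position, the $L$-coordinates of $\bm Z$ on that block are stochastically larger, so the block average tends to exceed the full $\bm Z$-average. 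The mediant argument therefore requires the full $\bm Y$-average to dominate this \emph{elevated} block average, i.e., an inequality of the form $\E[\psi(\bm Y_L)\,|\,\cdot\,]\ge \E[\psi(\bm Z_L)\,|\,Z_j=c_1,\,\cdot\,]$ comparing two \emph{different} base vectors under mixed equality-plus-tail constraints --- which is again an instance of the conjecture itself, not a lemma you have available. Your proposed fix ("a companion dual inequality established in lockstep by the same induction") is stated only as an expectation, with no formulation of the inequality, no verification of the base case, and no check that the induction is not circular. As it stands, the proposal is an honest and well-organized research plan that correctly isolates the obstruction, but it is not a proof, and the conjecture remains open.
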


\subsection{Knockout tournaments with a non-random draw}

The next counterexample shows that $\bm S$ is not NRD, NLTD or NRTD in a knockout tournament with a deterministic draw and without equal strength.

\begin{example}
 \label{ex-241108}
Consider a knockout tournament with four players. Player $1$ beats player $2$ with probability $1/2$, and loses to players $3$ and $4$ with probability $1$. Player $2$ beats players $3$ and $4$ with probability $1$, and player $3$ beats player $4$ with probability $1/2$. In the first round, players $1$ and $2$ are in one duel, and players $3$ and $4$ are in another duel. Then
$$
  \bm S=\left \{\begin{array}{ll} (1, 0, 2, 0), & \hbox{with prob.}\ 1/4, \\
     (0, 2, 1, 0), & \hbox{with prob.}\ 1/4,\\
     (1, 0, 0, 2), & \hbox{with prob.}\ 1/4, \\
     (0, 2, 0, 1), & \hbox{with prob.}\ 1/4,      \end{array} \right.
$$
and, hence,
\begin{align*}
   \P(S_3=0 | S_1=1) & =\P(S_3=2| S_1=1)=\frac {1}{2},\\
   \P(S_3=0 | S_1=0) & = \P(S_3=1 | S_1=0) =\frac {1}{2}.
\end{align*}
It is easy to see that
\begin{align*}
  \E [S_3|S_1=0] & =\frac {1}{2}<1=\E [S_3|S_1=1],\\
  \E [S_3|S_1\le 0] & =\frac {1}{2} <\frac {3}{4}=\E [S_3|S_1\le 1],\\
  \E [S_3|S_1\ge 0] & =\frac {3}{4} <1=\E [S_3|S_1\ge 1],
\end{align*}
which implies that $\bm S$ is not {\rm NRD}, {\rm NLTD} or {\rm NRTD}.\ \qed
\end{example}

Example \ref{ex-241109} below shows that $\bm S$ is not NRD or NLTD in a knockout tournament with a deterministic draw and with equal strength.

\begin{example}
 \label{ex-241109}
Consider a knockout tournament with four players of equal strength. In the first round, player $1$ plays against player $2$, and player $3$ against player $4$. Then $\bm S$ has eight outcomes, the permutations of $(0, 0, 1,2)$ with only one of the first two coordinates must be positive.

\begin{center}
\centerline{Table 1:\ Probability mass function of $\bm S$ } \medskip
  \begin{tabular}{c|c}  \hline $\ \ (S_1, S_2, S_3, S_4)\ \ $ & \ \ Probabilities \ \ \\ \hline
    $(0, 1, 0, 2)$ &  $1/8$ \\
     $(0, 1, 2, 0)$ &  $1/8$ \\
       $(0, 2, 1, 0)$ &  $1/8$ \\
         $(0, 2, 0, 1)$ &  $1/8$ \\
    $(1, 0, 0, 2)$ &  $1/8$ \\
      $(1, 0, 2, 0)$ &  $1/8$ \\
        $(2, 0, 1, 0)$ &  $1/8$ \\
         $(2, 0, 0, 1)$ &  $1/8$ \\   \hline
  \end{tabular}
\end{center}
To see that $\bm S$ is not {\rm NRD} or {\rm NLTD}, note that
\begin{align*}
  \P(S_3=0|S_1=0) &=\frac {1}{2},\qquad \P(S_3=1|S_1=0)=\P(S_3=2|S_1=0)=\frac {1}{4}, \\
  \P(S_3=0|S_1=1) &= \P(S_3=2 |S_1=1) =\frac {1}{2},\\
  \P(S_3=0|S_1=2) &= \P(S_3=1 |S_1=2) =\frac {1}{2}.
\end{align*}
Then,
\begin{align*}
  \E [S_3|S_1=0] & =\frac {3}{4}<1=\E [S_3|S_1=1],\\
  \E [S_3|S_1\le 0] & =\frac {3}{4} <\frac {5}{6}=\E [S_3|S_1\le 1],
\end{align*}
which implies that $\bm S$ is not {\rm NRD} or {\rm NLTD}. However, in this example with four players, $\bm S$ is {\rm NRTD} as can be seen by observing that
\begin{align*}
  [(S_2, S_3, S_4)|S_1\ge 0] & \ge_{\rm st} [(S_2, S_3, S_4)|S_1\ge 1] \ge_{\rm st} [(S_2, S_3, S_4)|S_1\ge 2],\\
  [(S_3, S_4)|S_1\ge 0, S_2\ge 0] & \ge_{\rm st} [(S_3, S_4)|S_1\ge 1, S_2\ge 0],\\
  [(S_2, S_4)|S_1\ge 0, S_3\ge 0] & \ge_{\rm st} [(S_2, S_4)|S_1\ge 1, S_3\ge 0]
             \ge_{\rm st} [(S_2, S_4)|S_1\ge 1, S_3\ge 1],\\
  [(S_2, S_4)|S_1\ge 1, S_3\ge 0] & \ge_{\rm st} [(S_2, S_4)|S_1\ge 2, S_3\ge 0]
            \ge_{\rm st} [(S_2, S_4)|S_1\ge 2, S_3\ge 1]. \ \hbox{\qed}
\end{align*}
\end{example}

\cite{MR23} used Example \ref{ex-241109} to show that $\bm S$ is not NA. However, their proof is not correct. They claimed that $\E [f_1(S_1, S_3)f_2(S_2, S_4)]=1/8$, $\E [f_1(S_1, S_3)]=1/4$, $\E [f_2(S_2, S_4)]=1/8$ and, thus,
\begin{equation}
 \label{eq-250501}
  {\rm Cov} (f_1(S_1, S_3), f_2(S_2, S_4))>0
\end{equation}
for two increasing functions $f_1(x_1, x_3)$ and $f_2(x_2, x_4)$, where $f_1$ takes the value $0$ everywhere apart from $f_1(0,1)=f_1(0,2)=1$, and $f_2$ takes the value $0$ everywhere apart from $f_2(2, 0)=1$. Such functions $f_1$ and $f_2$ do not exist since the monotonicity of $f_1$ and $f_2$ implies that $f_1(k, 2)\ge f_1(k,1)\ge 1$ and $f_2(2, k)\ge 1$ for $k=1, 2$. Thus, the functions $f_1$ and $f_2$ are not increasing. Therefore, \eqref{eq-250501} does not hold. We will show $\bm S$ is NA in Theorem \ref{pr-241110} for a knockout tournament with a deterministic draw and with equal strength.

To establish the NA and NSMD properties of $\bm S$, we need two useful lemmas.

\begin{lemma}
 \label{le-241112}
{\rm \citep{Bau97,HP99}}\ \
Let $\{X_\lm, \lm\in \Lambda\}$ be a family of random variables, where $\Lambda$ is a subset of $\R$. Let $\{X_{i,\lm}, \lm\in\Lambda\}$, $i\in [n]$, be independent copies of $\{X_\lm, \lm\in\Lambda\}$. For every function $\psi: \R^n\to\R$, define
\begin{align*}
     g(\lm_1, \lm_2, \ldots, \lm_n) & =\E\left [\psi \big (X_{1, \lm_1}, X_{2, \lm_2}, \ldots, X_{n,\lm_n}\big )\right ]
\end{align*}
where the expectation is assumed to exist. If $\psi$ is supermodular, and $X_\lm$ is stochastically increasing in $\lm$, then $g$ is a supermodular function defined on $\Lambda^n$.
\end{lemma}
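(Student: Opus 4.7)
\medskip
\noindent\textbf{Proof proposal.}

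The plan is to reduce supermodularity of $g$ to a pairwise (two-variable) inequality and then pull a pointwise supermodular inequality for $\psi$ through the expectation by means of a monotone coupling.

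\emph{First step: reduction.} A function on the product lattice $\Lambda^n$ is supermodular if and only if it is supermodular in each pair of coordinates with the remaining $n-2$ coordinates held fixed (the standard pairwise characterization of supermodularity on a product of chains). I therefore fix indices $i\ne j$ in $[n]$ and values $\lm_k\in\Lambda$ for $k\notin\{i,j\}$, take arbitrary $\lm_i\le\lm_i'$ and $\lm_j\le\lm_j'$, and aim to establish
\begin{equation*}
  g(\lm_i',\lm_j',\bm{\lm}_{-\{i,j\}})+g(\lm_i,\lm_j,\bm{\lm}_{-\{i,j\}})\ \ge\ g(\lm_i',\lm_j,\bm{\lm}_{-\{i,j\}})+g(\lm_i,\lm_j',\bm{\lm}_{-\{i,j\}}).
\end{equation*}

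\emph{Second step: coupling.} Because $X_\lm$ is stochastically increasing in $\lm$, the quantile-transform construction (equivalently, Strassen's coupling theorem) will let me build, on a single probability space, pairs $(Y_i,Y_i')$ and $(Y_j,Y_j')$ with $Y_i\stackrel{d}{=}X_{i,\lm_i}$, $Y_i'\stackrel{d}{=}X_{i,\lm_i'}$, $Y_j\stackrel{d}{=}X_{j,\lm_j}$, $Y_j'\stackrel{d}{=}X_{j,\lm_j'}$, and $Y_i\le Y_i'$, $Y_j\le Y_j'$ almost surely. I will take the two pairs independent of each other and jointly independent of independent copies of the remaining $X_{k,\lm_k}$, $k\notin\{i,j\}$. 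The four joint distributions figuring in the four evaluations of $g$ above then all appear as marginals of this single joint law.

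\emph{Third step: pointwise supermodularity and expectation.} On the coupling, the vectors $\bm u=(\ldots,Y_i',\ldots,Y_j,\ldots)$ and $\bm v=(\ldots,Y_i,\ldots,Y_j',\ldots)$, with common coordinates elsewhere, satisfy $\bm u\vee\bm v=(\ldots,Y_i',\ldots,Y_j',\ldots)$ and $\bm u\wedge\bm v=(\ldots,Y_i,\ldots,Y_j,\ldots)$, so the defining inequality of supermodularity for $\psi$ gives
\begin{equation*}
  \psi(\bm u\vee\bm v)+\psi(\bm u\wedge\bm v)\ \ge\ \psi(\bm u)+\psi(\bm v)
\end{equation*}
pointwise. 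Taking expectations and using the independence arranged in the coupling, each of the four terms equals the corresponding value of $g$, yielding the desired two-variable supermodular inequality.

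The only mildly delicate point is the simultaneous coupling, namely arranging that all four joint laws on the right-hand side arise as marginals of a single joint distribution. This is handled cleanly by generating $(Y_i,Y_i')$ from a single uniform variate $U_i$ via the quantile functions of $X_{i,\lm_i}$ and $X_{i,\lm_i'}$, doing the same for $(Y_j,Y_j')$ with an independent $U_j$, and taking the remaining $X_{k,\lm_k}$ jointly independent of $(U_i,U_j)$. Once that setup is in place, the argument is a single pointwise application of the supermodular inequality followed by an integration.
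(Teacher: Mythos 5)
Your proposal is correct. One thing to be aware of: the paper never proves this lemma itself --- it is stated as a quoted result from \cite{Bau97} and \cite{HP99} --- so there is no internal proof to compare against; your argument is essentially the standard one from that literature. The two pillars are both sound: (a) on $\Lambda^n$, a product of chains, supermodularity is equivalent to increasing differences in every pair of coordinates (Topkis's characterization; this is where $\Lambda\subset\R$ matters, since the equivalence fails on general lattices), and (b) the quantile coupling $Y_i=F_{\lm_i}^{-1}(U_i)\le F_{\lm_i'}^{-1}(U_i)=Y_i'$ is monotone almost surely precisely because stochastic increase of $X_\lm$ in $\lm$ means $F_{\lm}^{-1}(u)$ is increasing in $\lm$ for every $u$. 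Two points deserve to be stated explicitly in a final write-up. First, the identification of each of the four expectations with the corresponding value of $g$ rests on the fact that the coordinates entering $g$ are independent (the processes $\{X_{i,\lm}\}$ are independent copies), so $g$ depends only on the $n$ marginal laws; this is exactly what licenses replacing $(X_{i,\lm_i},X_{j,\lm_j})$ by their coupled versions while keeping the other coordinates as they are. Second, when you apply the pointwise supermodular inequality to $\bm u$ and $\bm v$, you need $\bm u$ and $\bm v$ to agree outside $\{i,j\}$ and to be ordered oppositely in coordinates $i$ and $j$; your construction guarantees this, but it is worth noting that this is where the almost-sure (rather than merely distributional) ordering from the coupling is used.
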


In the following lemma, when we consider the NSMD property, we always assume that the underlying probability space $(\Omega, \mathscr{F}, \P)$ is atomless.

\begin{lemma}
 \label{le-241113}
Let $\bm X^{(k)}= \(X_1^{(k)}, \ldots, X_n^{(k)}\)$, $k\in [m]$, and denote $\bm S^{(k)}= \(S_1^{(k)}, \ldots, S^{(k)}_n\)$ with $S^{(k)}_i=\sum^k_{\nu=1} X_i^{(\nu)}$ and $S^{(0)}_i=0$, $i\in [n]$. Assume that
\begin{itemize}
  \item[{\rm (i)}] for all $k\in [m]$, $\[\bm X^{(k)} \big | \bm S^{(k-1)}\]$ is {\rm NA} (respectively, {\rm NSMD});

  \item[{\rm (ii)}] for all $k\in [m]$ and $I\subset [n]$, $\[\bm X_I^{(k)}\big |\bm S^{(k-1)}\] \stackrel{d}{=}  \[\bm X_I^{(k)}\big |\bm S^{(k-1)}_I\]$;

   \item[{\rm (iii)}] for all $k\in [m]$ and $I\subset [n]$, $\bm X_I^{(k)}$ is stochastically increasing\footnote{For two random vectors (variables) $\bm X$ and $\bm\Theta$, $\bm X$ is said to be stochastically increasing in $\bm \Theta$ if $[\bm X | \bm \Theta = \bm \theta] \leq_{\rm st} [\bm X | \bm \Theta = \bm \theta']$ holds whenever $\bm \theta \leq \bm \theta'$.} in $\bm S^{(k-1)}_I$.
\end{itemize}
Then $\bm S^{(k)}$ is {\rm NA} (respectively, {\rm NSMD}) for $k\in [m]$.
\end{lemma}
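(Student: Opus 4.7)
The plan is to prove both assertions jointly by induction on $k$. The base case $k=1$ is immediate: since $\bm S^{(0)}=\bm 0$ is deterministic, $\bm S^{(1)}=\bm X^{(1)}\stackrel{d}{=}[\bm X^{(1)}\mid \bm S^{(0)}]$, and assumption (i) with $k=1$ directly yields the NA (respectively, NSMD) property.

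For the inductive step in the NA case, I would fix disjoint $A_1,A_2\subset [n]$ and coordinate-wise increasing functions $\psi_1,\psi_2$, and use the conditional covariance decomposition
\begin{align*}
\cov\!\left(\psi_1(\bm S^{(k)}_{A_1}),\psi_2(\bm S^{(k)}_{A_2})\right)
  &= \E\!\left[\cov\!\left(\psi_1(\bm S^{(k)}_{A_1}),\psi_2(\bm S^{(k)}_{A_2})\,\Big|\,\bm S^{(k-1)}\right)\right]\\
  &\quad + \cov\!\left(h_1(\bm S^{(k-1)}_{A_1}),h_2(\bm S^{(k-1)}_{A_2})\right),
\end{align*}
where $h_j(\bm s_{A_j}):=\E[\psi_j(\bm s_{A_j}+\bm X^{(k)}_{A_j})\mid \bm S^{(k-1)}_{A_j}=\bm s_{A_j}]$; the fact that the outer conditional expectation depends on $\bm S^{(k-1)}$ only through $\bm S^{(k-1)}_{A_j}$ is precisely assumption (ii). Given $\bm S^{(k-1)}=\bm s$, the inner covariance equals that of $\widetilde\psi_1(\bm X^{(k)}_{A_1})$ and $\widetilde\psi_2(\bm X^{(k)}_{A_2})$ with $\widetilde\psi_j(\bm y)=\psi_j(\bm s_{A_j}+\bm y)$ still coordinate-wise increasing, hence is non-positive by (i). Assumption (iii) combined with the increasingness of $\psi_j$ implies $h_j$ is increasing in $\bm s_{A_j}$, via a standard chain: when $\bm s_{A_j}\le \bm s'_{A_j}$ both the deterministic shift increases and the conditional distribution of $\bm X^{(k)}_{A_j}$ stochastically increases. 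The inductive hypothesis that $\bm S^{(k-1)}$ is NA then makes the second term non-positive as well.

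For the inductive step in the NSMD case, I would fix a supermodular $\psi$. Conditioning on $\bm S^{(k-1)}=\bm s$ and applying (i) fibrewise, together with (ii) to rewrite the conditional distributions coordinate-wise, gives
\begin{equation*}
\E[\psi(\bm S^{(k)})\mid \bm S^{(k-1)}=\bm s]\;\le\;\phi(\bm s):=\E\!\left[\psi\!\left(s_1+\widetilde X^{(k)}_{1,s_1},\ldots,s_n+\widetilde X^{(k)}_{n,s_n}\right)\right],
\end{equation*}
where the $\widetilde X^{(k)}_{i,\cdot}$ are chosen mutually independent with $\widetilde X^{(k)}_{i,s_i}\stackrel{d}{=}[X^{(k)}_i\mid S^{(k-1)}_i=s_i]$. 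By (iii) each family $\{s+\widetilde X^{(k)}_{i,s}\}_{s}$ is stochastically increasing in $s$, so the natural extension of Lemma \ref{le-241112} to $n$ independent families shows $\phi$ is supermodular on $\R^n$. The inductive hypothesis on $\bm S^{(k-1)}$ then yields $\E[\phi(\bm S^{(k-1)})]\le \E[\phi(\bm S^{(k-1)\perp})]$. Expanding the right-hand side using independence of the $\widetilde X^{(k)}_{i,\cdot}$ and of the components of $\bm S^{(k-1)\perp}$, and observing via (ii) with $|I|=1$ that $(S^{(k-1)\perp}_i,\widetilde X^{(k)}_{i,S^{(k-1)\perp}_i})\stackrel{d}{=}(S^{(k-1)}_i,X^{(k)}_i)$ marginally, identifies $\E[\phi(\bm S^{(k-1)\perp})]$ with $\E[\psi(\bm S^{(k)\perp})]$, completing the induction.

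The principal technical obstacle is the NSMD step, where one must orchestrate four ingredients simultaneously: the conditional NSMD from (i), the reduction of the conditional distribution to coordinate-wise product form via (ii), the supermodularity of $\phi$ from (iii) together with Lemma \ref{le-241112}, and the final marginal identification via (ii) again. Verifying that Lemma \ref{le-241112} remains valid when the $n$ families $\{\widetilde X^{(k)}_{i,\lambda}\}_{\lambda}$ are allowed to differ across $i$ rather than being $n$ independent copies of a single family is a small but necessary extension; its proof carries over unchanged from \cite{Bau97,HP99}.
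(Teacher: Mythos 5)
Your proposal is correct and follows essentially the same route as the paper: the same conditional covariance decomposition plus induction on $k$ for the NA part, and the same fibrewise conditioning, auxiliary vector of independent coordinate-wise conditional copies, supermodularity of $\phi$ via Lemma \ref{le-241112}, and final marginal identification for the NSMD part. Your remark that Lemma \ref{le-241112} must be extended to allow the $n$ stochastically increasing families to differ across coordinates is a genuine detail that the paper silently glosses over, but as you say the proof of that lemma carries over unchanged, so it does not alter the argument.
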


\begin{proof}
First, we prove the NA property of $\bm S^{(k)}$ by induction on $k\in [m]$. For $k=1$, $\bm S^{(1)}=\bm X^{(1)}$ is NA by assumption (i). Assume $\bm S^{(k)}$ is NA for $k\in [m]$. Let $I_1$ and $I_2$ be two disjoint proper subsets of $[n]$, and let $\psi_j:\R^{|I_j|}\to\R$ be an increasing function for $j=1, 2$. Then,
\begin{align*}
 & {\rm Cov} \(\psi_1\(\bm S^{(k+1)}_{I_1}\), \psi_2\(\bm S^{(k+1)}_{I_2}\)\) \\
     & \qquad =  {\rm Cov} \(\E \[\psi_1\(\bm X_{I_1}^{(k+1)}+\bm S^{(k)}_{I_1}\) \big | \bm S^{(k)}\], \E\[\psi_2\(\bm X_{I_2}^{(k+1)}+ \bm S^{(k)}_{I_2}\) \big | \bm S^{(k)}\]\) \\
    &\qquad \quad + \E \[{\rm Cov} \(\psi_1\(\bm X_{I_1}^{(k+1)}+\bm S^{(k)}_{I_1}\), \psi_2\(\bm X_{I_2}^{(k+1)}+\bm S^{(k)}_{I_2}\)\Big | \bm S^{(k)} \)\] \\
    & \qquad \le  {\rm Cov} \(\E \[\psi_1\(\bm X_{I_1}^{(k+1)}+\bm S^{(k)}_{I_1}\) \big | \bm S^{(k)}\], \E\[\psi_2\(\bm X_{I_2}^{(k+1)}+ \bm S^{(k)}_{I_2}\) \big | \bm S^{(k)}\]\)\\
    & \qquad = {\rm Cov} \(\varphi_1\big (\bm S^{(k)}\big ), \varphi_2\big (\bm S^{(k)}\big )\),
\end{align*}
where the first inequality follows from assumption (i), and
$$
  \varphi_j\big (\bm S^{(k)}\big )=\E\[\psi_j\(\bm X_{I_j}^{(k+1)}+\bm S^{(k)}_{I_j}\)\big |\bm S^{(k)}\], \quad j=1, 2.
$$
By assumption (ii), it follows that $\varphi_j\big (\bm S^{(k)}\big )$ depends on $\bm S_{I_j}^{(k)}$ only, that is,
$$
  \varphi_j\big (\bm S^{(k)}\big )=\E\[\psi_j\(\bm X_{I_j}^{(k+1)}+\bm S^{(k)}_{I_j}\)\big |\bm S_{I_j}^{(k)}\] \stackrel {\rm def}{=} \varphi_j^\ast\big (\bm S_{I_j}^{(k)}\big ), \quad j=1, 2.
$$
By assumption (iii), $\varphi_j^\ast\big (\bm s_{I_j}\big )$ is increasing in $\bm s_{I_j}$. So, we have
$$
  {\rm Cov} \(\psi_1\(\bm S^{(k+1)}_{I_1}\), \psi_2\(\bm S^{(k+1)}_{I_2}\)\)=
  {\rm Cov} \(\varphi_1^\ast\big (\bm S_{I_1}^{(k)}\big ), \varphi_2^\ast\big (\bm S_{I_2}^{(k)}\big )\)\le 0
$$
by the induction assumption that $\bm S^{(k)}$ is NA. This means that $\bm S^{(k+1)}$ is NA. Therefore, we prove the NA property of $\bm S^{(k)}$ by induction.

Next, we prove the NSMD property of $\bm S^{(k)}$ by induction on $k\in [m]$. For $k=1$, $\bm S^{(1)}=\bm X^{(1)}$ is NSMD by assumption (i).  Assume $\bm S^{(k)}$ is NSMD for $k\in [m]$. Let $\psi: \R^n\to\R$ be a supmodular function. Since the underlying probability space is atomless, by assumptions (i) and (ii), we have
\begin{align*}
  \E \[\psi\(\bm S^{(k+1)}\)\] & = \E\left\{\E \[\psi\(\bm X^{(k+1)}+\bm S^{(k)}\) \big | \bm S^{(k)}\] \right \} \\
   & \le  \E\left\{\E \[\psi\({\bm Y}(\bm S^{(k)} ) +\bm S^{(k)}\) \big | \bm S^{(k)}\] \right \}
   = \E \[\varphi\(\bm S^{(k)}\) \],
\end{align*}
where $\varphi(\bm s)= \E \[\psi\({\bm Y}(\bm s) +\bm s\)\]$ for $\bm s\in \R^n$, and $\bm Y(\bm s)=(Y_1(s_1), \ldots, Y_n(s_n))$ is a vector of independent random variables, independent of all other random variables, such that $Y_i(x) \stackrel {d}{=} \[X_i^{(k+1)}\big | S^{(k)}_i=x\]$ for $i\in [n]$ and $x\in\R$. By assumption (iii) and Lemma \ref{le-241112}, we have
$$
   \varphi(\bm s)= \E \[\psi\big (Y_1(s_1) +s_1, \ldots, Y_n(s_n)+s_n\big )\]
$$
is also supermodular in $\bm s\in\R^n$. By the induction assumption that $\bm S^{(k)}$ is NSMD, there exists $\bm S^{(k)*}=\big (S^{(k)*}_1, \ldots$,  $S^{(k)*}_n\big )$ of independent random variables such that $S^{(k)*}_i \stackrel {d}{=} S^{(k)}_i$ for $i\in [n]$ and
$$
  \E \[\varphi\(\bm S^{(k)}\) \] \le \E \[\varphi\(\bm S^{(k)*}\) \].
$$
Define $\bm S^{(k+1)*} =\bm Y\(\bm S^{(k)*}\)+ \bm S^{(k)*}$. Then the components of $\bm S^{(k+1)*}$ are independent, $S_i^{(k+1)*} \stackrel {d}{=} S_i^{(k+1)}$ for $i\in [m]$, and
$$
   \E \[\psi\(\bm S^{(k+1)}\)\] \le  \E \[\varphi\(\bm S^{(k)*}\) \]
   = \E \[\psi\({\bm Y}(\bm S^{(k)*} ) +\bm S^{(k)*}\)\]  =\E \[\psi\(\bm S^{(k+1)*}\)\],
$$
implying that $\bm S^{(k+1)}$ is NSMD. Therefore, the desired result follows by induction.
\end{proof}

\begin{theorem}
  \label{pr-241110}
Consider a knockout tournament with $n=2^\ell$ players of equal strength, where $\ell\ge 2$. If the schedule of matches is deterministic, then $\bm S$ is {\rm NA} and, hence, {\rm NSMD}.
\end{theorem}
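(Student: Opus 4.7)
The plan is to apply Lemma \ref{le-241113} to the natural round-by-round decomposition of $\bm S$. Write $X_i^{(k)}=\mathbf{1}\{\text{player }i\text{ wins round }k\}$ for $i\in[n]$ and $k\in[\ell]$ (so $X_i^{(k)}=0$ automatically whenever $i$ has already been eliminated), and set $S_i^{(k)}=\sum_{\nu=1}^k X_i^{(\nu)}$, so that $\bm S=\bm S^{(\ell)}$. The theorem will follow once the three hypotheses (i)--(iii) of Lemma \ref{le-241113} are verified for this decomposition.

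For (i), given $\bm S^{(k-1)}$ the set of players alive at the start of round $k$ is $\{i:S_i^{(k-1)}=k-1\}$, and the deterministic bracket prescribes the $2^{\ell-k}$ round-$k$ matches among them. Under equal strength each match is decided by a fresh independent fair coin, so $[\bm X^{(k)}\,|\,\bm S^{(k-1)}]$ decomposes into constant-$0$ coordinates (the eliminated players) together with mutually independent pairs $(X_i^{(k)},X_j^{(k)})$ satisfying $X_i^{(k)}+X_j^{(k)}=1$. Each such pair is NA (being two Bernoulli variables with a constant sum), constants are NA, and NA is closed under independent juxtaposition, so $[\bm X^{(k)}\,|\,\bm S^{(k-1)}]$ is NA (hence NSMD by fact (3) in the introduction).

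For (ii), I would argue that the conditional law of $\bm X_I^{(k)}$ depends on $\bm S^{(k-1)}$ only through $\bm S_I^{(k-1)}$. Any alive $i\in I$ meets the unique winner of its sibling size-$2^{k-1}$ sub-bracket $B$; since exactly one player of $B$ survives to round $k$, the opponent lies in $I$ iff some $j\in B\cap I$ has $s_j=k-1$, in which case the opponent equals that $j$. This predicate is readable off $\bm s_I$ alone, so the round-$k$ pairing pattern within $I$ is determined by $\bm s_I$; the round-$k$ coin flips are themselves fresh and independent of $\bm S^{(k-1)}$.

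For (iii), take $\bm s_I\le\bm s'_I$ in the support of $\bm S_I^{(k-1)}$; I would prove $[\bm X_I^{(k)}\,|\,\bm S_I^{(k-1)}=\bm s_I]\le_{\rm st}[\bm X_I^{(k)}\,|\,\bm S_I^{(k-1)}=\bm s'_I]$ by a componentwise coupling. Since $s'_i\ge s_i$ and the support forbids two alive players in a common size-$2^{k-1}$ sub-bracket, the alive subset $A=\{i\in I:s_i=k-1\}$ can only enlarge to some $A'\supseteq A$ without displacing any previously alive element; moreover, whenever $i\in I$ is newly alive the former winner of $i$'s sub-bracket must have lain outside $I$. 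For each pair of sibling sub-brackets the local contribution to $\bm X_I^{(k)}$ then falls into a short list of cases (no change, or one or two new alive $I$-players), and in each case a single reuse or sign-flip of the relevant fresh coin yields a pointwise $\le$ coupling. For instance, if $i\in B_L\cap I$ is newly alive while $j\in B_R\cap I$ was already alive, then under $\bm s_I$ the local contribution is $(0,W_j)$ with $W_j$ a fair coin, and under $\bm s'_I$ it is $(W,1-W)$; taking $W:=1-W_j$ yields $(0,1-W)\le(W,1-W)$. Gluing these local couplings across the independent sibling-sub-bracket pairs delivers (iii). The main obstacle is the bookkeeping in (iii) --- one must verify that the ``at most one alive per size-$2^{k-1}$ sub-bracket'' support constraint genuinely reduces all legitimate moves $\bm s_I\le\bm s'_I$ to this finite case list --- after which Lemma \ref{le-241113} gives NA, and fact (3) of the introduction yields NSMD.
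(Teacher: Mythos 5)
Your proof is correct and follows essentially the same route as the paper: the same round-by-round decomposition $S_i^{(k)}=\sum_{\nu=1}^{k}X_i^{(\nu)}$ fed into Lemma \ref{le-241113}, with hypotheses (i)--(iii) verified from the deterministic bracket and the independent fair-coin matches. The only difference is one of detail: the paper proves NA explicitly only for $\bm X^{(1)}$ and declares (i) and (iii) obvious, whereas you spell out the conditional pairing structure within $I$ and the sibling-sub-bracket coupling for (iii); your case analysis there is sound.
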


\begin{proof}
It suffices to prove $\bm S$ is NA since NA implies NSMD. By a similar argument to that in the proof of Proposition 3 in \cite{MR23}, without loss of generality, assume that in the first round player $2i-1$ plays against player $2i$ for $i\in [n/2]$. For $i\in [n]$, denote
$$
   X_i^{(1)}=\left\{ \begin{array}{ll} 1, & \hbox{if player $i$ wins the first round}, \\
   0, & \hbox{if player $i$ loses the first round}. \end{array}\right.
$$
Then the pairs $\big (X_{2i-1}^{(1)}, X^{(1)}_{2i}\big )$, $i\in [n/2]$, are independent and NA. By Property ${\rm P}_7$ in \cite{JP83}, it follows that $\bm X^{(1)}=\big (X_1^{(1)}, \ldots, X^{(1)}_n\big )$ is NA. For $k\ge 2$, define
$$
   X_i^{(k)}=\left\{ \begin{array}{ll} 1, & \hbox{if player $i$ wins the $k$th round}, \\
   0, & \hbox{otherwise}, \end{array}\right.
$$
and $S^{(k)}_i=\sum^k_{j=1} X_i^{(j)}$ for $i\in [n]$. Note that if $X_i^{(k-1)}=0$ then $X^{(k)}_i=0$. Obviously, assumptions (i) and (iii) of Lemma \ref{le-241113} are seen to hold. Given $\bm S^{(k-1)}$, among all players $I\subset [n]$, only players $\big\{i\in I: S^{(k-1)}_i=k-1\big \}$ move to the $k$th round, and their scores are not affected by $\bm S^{(k-1)}_{[n]\backslash I}$ since the schedule of matches is deterministic. Thus, assumption (ii) of Lemma \ref{le-241113} is satisfied. Therefore, the NA property of $\bm S$ follows from Lemma \ref{le-241113}.
\end{proof}

Motivated by Example \ref{ex-241109}, we have the next theorem concerning the NRTD property of $\bm S$ in a knockout tournament with a deterministic draw and players having equal strength.

\begin{theorem}
  \label{pr-241111}
Consider a knockout tournament with $n=2^\ell$ players of equal strength, where $\ell\ge 2$. If the schedule of matches is deterministic, then $\bm S$ is {\rm NRTD}.
\end{theorem}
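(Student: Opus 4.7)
The plan is to reduce NRTD to comparing two conditional distributions obtained by a unit increase in a single coordinate of $\bm x_J$, and then to exploit the deterministic bracket structure together with the equal-strength assumption via a direct coupling. Since $\bm S$ is integer-valued, it suffices to establish \eqref{eq-250717} when $\bm x_J$ and $\bm x_J^\ast$ have integer coordinates and $\bm x_J^\ast$ is obtained from $\bm x_J$ by incrementing a single coordinate by one; by transitivity of $\ge_{\rm st}$, the problem reduces to showing that for any integer $k\ge 0$, any $j_0\in [n]$, any disjoint $I,J'\subset [n]\setminus\{j_0\}$, and any $\bm x_{J'}$ making both of the conditioning events below have positive probability,
$$
   \big [\bm S_I \,\big |\, \bm S_{J'}>\bm x_{J'},\, S_{j_0}\ge k\big ]\ \ge_{\rm st}\ \big [\bm S_I \,\big |\, \bm S_{J'}>\bm x_{J'},\, S_{j_0}\ge k+1\big ].
$$
Setting $E_0=\{\bm S_{J'}>\bm x_{J'}\}$ and decomposing $\{S_{j_0}\ge k\}=\{S_{j_0}=k\}\cup\{S_{j_0}\ge k+1\}$, the left-hand distribution is a convex mixture of the two resulting conditional distributions, so it further suffices to prove
$$
   \big [\bm S_I \,\big |\, E_0,\, S_{j_0}= k\big ]\ \ge_{\rm st}\ \big [\bm S_I \,\big |\, E_0,\, S_{j_0}\ge k+1\big ]. \qquad (\star)
$$

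For $(\star)$, denote by $B\subset[n]$ the size-$2^{k+1}$ sub-bracket containing $j_0$, and write $B=B_0\cup B_1$ where $B_0$ (of size $2^k$) contains $j_0$ and $B_1$ is the sibling half. Conditional on $\{S_{j_0}\ge k\}$, player $j_0$ wins $B_0$; let $W$ denote the (random) winner of $B_1$, so that $j_0$ and $W$ meet in round $k+1$. The positive-probability assumption precludes $E_0$ from requiring any player of $B_1$ to have score $\ge k+1$ (this would force $W$ to win $B$, incompatible with $\{S_{j_0}\ge k+1\}$), and the constraints of $E_0$ on $B_0\setminus\{j_0\}$ can only require scores $\le k-1$. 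Consequently the constraints imposed by $E_0$ on the internal matches of $B_0$, of $B_1$, and on matches outside $B$ are identical under the two conditionings in $(\star)$; only the round-$(k+1)$ outcome between $j_0$ and $W$ differs.

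I would then couple the two conditional distributions by using the same randomness for all matches inside $B_0$ and $B_1$, all matches outside $B$, and all round-$r$ matches (for $r\ge k+2$) involving the $B$-side survivor. By equal strength, the distribution of the number $W_{\rm ext}\ge 0$ of further wins accumulated by the $B$-side survivor is the same whether the survivor is $j_0$ or $W$, since it depends only on coin flips independent of the survivor's identity and on $E_0$-forced outcomes of the form ``a player $X$ outside $B$ beats the $B$-side survivor'', which are also insensitive to who the survivor is. Under this coupling, the only coordinates of $\bm S$ that differ between the two conditional distributions are those of $j_0$ and $W$: on the first, $S_{j_0}=k$ and $S_W=k+1+W_{\rm ext}$; on the second, $S_{j_0}=k+1+W_{\rm ext}$ and $S_W=k$. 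Since $j_0\notin I$, the vector $\bm S_I$ differs only in the $W$-coordinate (and only if $W\in I$), where the first value $k+1+W_{\rm ext}\ge k$ dominates the second value $k$, yielding $(\star)$.

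The main obstacle I anticipate lies in the coupling step under a fully general $E_0$: one must carefully verify that no constraint in $E_0$ secretly couples the round-$(k+1)$ outcome of $j_0$ versus $W$ to other randomness in the tournament. This reduces to a case analysis partitioning $J'$ according to whether its members lie in $B_0$, $B_1$, or $[n]\setminus B$, and checking in each case that the constraints can be enforced identically under either outcome of the round-$(k+1)$ match, so that all scores other than those of $j_0$ and $W$ are genuinely identically distributed in the two conditional laws.
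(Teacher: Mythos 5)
Your proposal is correct, and although it shares the paper's skeleton (reduce NRTD to a unit increment of a single threshold, then isolate the size-$2^{k+1}$ sub-bracket $B$ containing $j_0$), it finishes by a genuinely different route. The paper proves the analogue of your $(\star)$ --- in its notation, comparing conditioning on $E_0=\{S_1=h-1,\ \bm S_{I\setminus\{1\}}\ge \bm s_{I\setminus\{1\}}\}$ against $E_2=\{S_1\ge h,\ \bm S_{I\setminus\{1\}}\ge \bm s_{I\setminus\{1\}}\}$, with $h=k+1$ --- by splitting the unconditioned coordinates into the out-of-bracket part, whose conditional law is unchanged (the paper's \eqref{eq-241120-3}), and the in-bracket part, which is stochastically ordered given the out-of-bracket scores (the paper's \eqref{eq-241120-4}); it then recombines these via the ratio computation $(\eta_3+\eta_4)/(\eta_1+\eta_2)\ge \eta_4/\eta_2$, which needs the additional fact $\eta_1=\eta_2$, i.e.\ that given $E_1$ the round-$(k+1)$ match is a fair coin. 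Your mixture decomposition of $\{S_{j_0}\ge k\}$ makes that $1/2$--$1/2$ fact unnecessary (a mixture dominates its smaller component whatever the weights), and your single coupling replaces \eqref{eq-241120-3} and \eqref{eq-241120-4} simultaneously, yielding the sharper conclusion that the two conditional laws of the whole score vector differ only by transposing coordinates $j_0$ and $W$, whence dominance of $\bm S_I$ is immediate since $j_0\notin I$. The obstacle you flag is real but resolves exactly as you outline; to make it rigorous, represent the equal-strength tournament by independent fair coin flips attached to the internal nodes of the bracket tree, each flip deciding which child subtree's winner advances (side-based rather than identity-based randomness). Then the scores of players in $B_0\setminus\{j_0\}$, in $B_1\setminus\{W\}$, and in $[n]\setminus B$ are functions of flips in three disjoint sets of nodes; your positive-probability observations (required scores $\le k-1$ in $B_0\setminus\{j_0\}$, $\le k$ in $B_1$) force every constraint of $E_0$ inside $B$ to be a function of the within-half flips only, and constraints on players outside $B$ involve only flips outside or above $B$, which never reference the identity of the $B$-survivor; hence $\{E_0, S_{j_0}\ge k\}$ factors as a product event independent of the flip at the root of $B$, and the coupling is legitimate. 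What the paper's route buys is that it avoids formalizing this product structure; what yours buys is transparency and a stronger structural description of the two conditional laws.
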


\begin{proof}
Since the schedule of matches is deterministic, without loss of generality, assume that in the first round, player $2^k-1$ plays against player $2^k$ for each $k\in [\ell]$; in the second round, the winner between player $1$ and $2$ plays against the winner between player $3$ and $4$, the winner between player $5$ and $6$ plays against the winner between player $7$ and $8$, and so on. In the next rounds, all matches are arranged in a similar way. To prove that $\bm S$ is NRTD, it suffices to prove that, for any nonempty set $I\varsubsetneq [n]$ and an increasing function $\psi: \R^{n-|I|}\to \R$,
\begin{equation}
 \label{eq-241120-1}
  \E\[\psi\(\bm S_{[n]\backslash I}\) \big | S_{i_0}\ge h-1, \bm S_{I\backslash\{i_0\}} \ge \bm s_{I\backslash\{i_0\}}\]    \ge  \E\[\psi\(\bm S_{[n]\backslash I}\)\big | S_{i_0}\ge h, \bm S_{I\backslash\{i_0\}} \ge \bm s_{I\backslash\{i_0\}}\]
\end{equation}
for each $i_0\in I$, where $h\in [\ell]$ and $\bm s_{I\backslash\{i_0\}}$ satisfy that
\begin{equation}
 \label{eq-241120-2}
  \P\(S_{i_0}\ge h-1, \bm S_{I\backslash\{i_0\}} \ge \bm s_{I\backslash\{i_0\}}\)
     \ge  \P\(S_{i_0}\ge h, \bm S_{I\backslash\{i_0\}} \ge \bm s_{I\backslash\{i_0\}}\)>0.
\end{equation}
Without loss of generality, assume $i_0=1$. From the second strict inequality, we know that $s_i\le h-1$ for all $i\in [2^h]\cap (I\backslash \{1\})$.

Define $K=[2^h]\cap ([n]\backslash I)$ and $J=([n]\backslash [2^h])\cap ([n]\backslash I)= [n]\backslash (I\cup K)$, and denote
\begin{align*}
    E_0 & =\left\{S_1= h-1, \bm S_{I\backslash\{1\}} \ge \bm s_{I\backslash\{1\}}\right \},\\
    E_1 & =\left\{S_1\ge  h-1, \bm S_{I\backslash\{1\}} \ge \bm s_{I\backslash\{1\}}\right \},\\
    E_2 & =\left\{S_1\ge  h, \bm S_{I\backslash\{1\}} \ge \bm s_{I\backslash\{1\}}\right \}.
\end{align*}
Obviously, $E_1=E_0\cup E_2$ and $E_0 \cap E_2=\emptyset$. From the specified schedule of matches, it is known that the outcomes in the first $h$ rounds for players in $[2^h]$ do not change the distribution of $\bm S_{[n]\backslash [2^h]}$ because only one winner among the first $2^h$ players will play against with one player $k\in [n]\backslash [2^h]$. Then
\begin{equation}
 \label{eq-241120-3}
   \[\bm S_J | E_0\]   \stackrel {d}{=} \[\bm S_J | E_2\].
\end{equation}
Next, we prove that
\begin{equation}
 \label{eq-241120-4}
   \[\bm S_K | E_2, \bm S_J=\bm s_J\]  \le_{\rm st} \[\bm S_K | E_0, \bm S_J=\bm s_J\]
\end{equation}
for all possible choices of $\bm s_J$. To simplify notations, define
\begin{align*}
  \bm Y_K & =\[\bm S_K | E_0, \bm S_J=\bm s_J\], \qquad
  \bm Z_K  =\[\bm S_K | E_2, \bm S_J=\bm s_J\].
\end{align*}
Since the event $\{S_1\ge h\}$ means that player $1$ beats all players $i\in [2^h]$, it follows that
$Z_k \le h-1$ for each $k\in K$. To prove \eqref{eq-241120-4}, let $\phi: \R^{|K|}\to\R$ be an increasing function. First, note that
\begin{equation}
 \label{eq-241120-5}
   \P(\bm Z_K =\bm s_K) = {\red \P} (\bm Y_K=\bm s_K)
\end{equation}
whenever $s_k<h-1$ for all $k\in K$ because players $k\in K$ were knocked out in the first $h-1$ round. In view of \eqref{eq-241120-5}, we have
\begin{align*}
  \E [\phi(\bm Y_K)] & =\E [\phi (\bm Y_K)\cdot 1_{\{Y_k<h-1, k\in K\}} ] + \sum_{k\in K} \E \[\phi (\bm Y_K)\cdot 1_{\{Y_j<h-1, j\in K\backslash\{k\}\}} \cdot 1_{\{Y_k\ge h-1\}}\]\\
  & \ge \E [\phi (\bm Z_K)\cdot 1_{\{Z_k<h-1, k\in K\}} ] + \sum_{k\in K} \E \[\phi (h-1, \bm Y_{K\backslash \{k\}}) \cdot 1_{\{Y_j<h-1, j\in K\backslash\{k\}\}} \]\\
   & = \E [\phi (\bm Z_K)\cdot 1_{\{Z_k<h-1, k\in K\}} ] + \sum_{k\in K} \E \[\phi (h-1, \bm Z_{K\backslash \{k\}}) \cdot 1_{\{Z_j<h-1, j\in K\backslash\{k\}\}} \]\\
   & = \E [\phi (\bm Z_K)\cdot 1_{\{Z_k<h-1, k\in K\}} ] + \sum_{k\in K} \E \[\phi (\bm Z_K) \cdot 1_{\{Z_j<h-1, j\in K\backslash\{k\}\}} \cdot 1_{\{Z_k= h-1\}}\]\\
   & = \E [\phi(\bm Z_K)],
\end{align*}
which implies $\bm Z_K\le_{\rm st} \bm Y_K$, that is, \eqref{eq-241120-4}.

Next, we turn to prove \eqref{eq-241120-1}. Note that
\begin{align*}
   \E\[\psi\(\bm S_{[n]\backslash I}\) \big | E_1\]
   & =\frac { \E\[\psi\(\bm S_J, \bm S_K\)\cdot 1_{E_1}\]  }{\P(E_1)}
         \stackrel {\rm def}{=} \frac {\eta_3 +\eta_4}{\eta_1+\eta_2},
\end{align*}
where $\eta_1= \P(E_0)$, $\eta_2 =\P(E_2)$, and
\begin{align*}
 \eta_3 &=\E\[\psi\(\bm S_J,\bm S_K\)\cdot 1_{E_0}\],\qquad \eta_4 =\E\[\psi\(\bm S_J,\bm S_K\)\cdot 1_{E_2}\].
\end{align*}
On the other hand, given $S_1\ge h-1$ and $\bm S_{I\backslash\{1\}} \ge \bm s_{I\backslash\{1\}}$, player $1$ will play a match with another player from $[2^h]$ in the $h$th round, and thus the events $\{S_1=h-1\}$ and $\{S_1\ge h\}$ occur respectively with probabilities $1/2$ since he wins and loses in this round with probability $1/2$. So, we have
$$
    \eta_1=\P\(S_1=h-1 |E_1\) \cdot \P\(E_1\) =\P(S_1\ge h | E_1) \cdot  \P(E_1) = \P(E_2)= \eta_2.
$$
Also, by \eqref{eq-241120-3} and \eqref{eq-241120-4}, we have
\begin{align*}
 \eta_3 &=\sum_{\bm s_J}\E\[\psi\(\bm S_J, \bm S_K\)\cdot 1_{\{\bm S_J=\bm s_J, E_0\}}\]\\
     &= \eta_1 \sum_{\bm s_J} \E\[\psi\(\bm s_J, \bm S_K\) \big | \bm S_J=\bm s_J, E_0 \]
         \cdot \P\(\bm S_J=\bm s_J | E_0 \)\\
    & \ge \eta_2 \sum_{\bm s_J} \E\[\psi\(\bm s_J, \bm S_K\) \big | \bm S_J=\bm s_J, E_2 \]
          \cdot \P\(\bm S_J=\bm s_J | E_2 \)\\
   &= \sum_{\bm s_J} \E\[\psi\(\bm S_J, \bm S_K\)\cdot 1_{\{\bm S_J=\bm s_J, E_2\}}\] = \eta_4.
\end{align*}
Therefore,
$$
   \E\[\psi\(\bm S_{[n]\backslash I}\) | E_1\]  =\frac {\eta_3+\eta_4}{\eta_1+\eta_2}\ge \frac {\eta_4}{\eta_2} \ge  \E\[\psi\(\bm S_{[n]\backslash I}\) | E_2\].
$$
This proves \eqref{eq-241120-1}.
\end{proof}

\begin{remark}
We give an intuitive interpretation of the {\rm NRTD} result in Theorem \ref{pr-241111}, which can be regarded as a less rigorous proof. Observe that, given $S_k\ge h$ for some $k\in [n]$, it means that player $k$ won in the first $h$ rounds, and no other deterministic information about his score can be said in the next $\ell-h$ rounds. In the proof of Theorem \ref{pr-241111}, define
\begin{align*}
  \bm U_{[n]\backslash I} & =\[\bm S_{[n]\backslash I}\big | E_1\]=\[\bm S_{[n]\backslash I}\big | E_0\cup E_2\],\qquad \bm V_{[n]\backslash I} = \[\bm S_{[n]\backslash I}\big | E_2\].
\end{align*}
We compare $E_1$ and $E_2$. The difference between $E_1$ and $E_2$ is that player $i_0$ won the first $h$ rounds for $E_2$ while player $i_0$ just won the first $h-1$ rounds for $E_1$. Intuitively, $\bm S_{[n]\backslash I}$
given $E_0$ tends to take larger values than given $E_2$. Thus, $\bm U_{[n]\backslash I}$ is stochastically larger than $\bm V_{[n]\backslash I}$.
\end{remark}

\section*{Funding}

Z. Zou is supported by National Natural Science Foundation of China (No. 12401625), the China Postdoctoral Science Foundation (No. 2024M753074), the Postdoctoral Fellowship Program of CPSF (GZC20232556), and the Fundamental Research Funds for the Central Universities (No. WK2040000108). T. Hu would like to acknowledge financial support from National Natural Science Foundation of China (No. 72332007, 12371476).

\section*{Disclosure statement}

No potential conflict of interest was reported by the authors.


\end{document}